\font\smallsc=cmcsc10
\font\smallsl=cmsl10
\newtheorem{theorem}{Theorem}
\newtheorem{lemma}[theorem]{Lemma}
\newtheorem{proposition}[theorem]{Proposition}
\newcommand{\Ocal}{\mathcal O}
\newcommand{\C}{\mathbb{C}}
\newcommand{\Z}{\mathbb{Z}}
\newcommand{\Q}{\mathbb{Q}}
\renewcommand{\:}{\colon }
\begin{document}

\title{Abel-Prym maps for isotypical components of Jacobians}
\author{Juliana Coelho\\{\scriptsize julianacoelhochaves@id.uff.br}
\and Kelyane Abreu\\{\scriptsize kelyane\_abreu@hotmail.com}}

\maketitle

\begin{abstract}
Let $C$ be a smooth non-rational projective curve over the complex field $\C$. If $A$ is an abelian subvariety of the Jacobian $J(C)$, we consider the  Abel-Prym map  $\varphi_A : C \rightarrow A$ defined as  the composition of the Abel map of $C$ with the norm map of $A$. 
The goal of this work is to investigate the degree of the map  $\varphi_A$
in the case where $A$ is one of the components of an isotypical decomposition of $J(C)$. 
In this case we obtain a lower bound for $\deg(\varphi_A)$
and, under some hypotheses, also an upper bound.
We then apply the results obtained to compute degrees of Abel-Prym maps in a few examples. In particular, these examples show that both bounds are sharp.
\end{abstract}

\section{Introduction}

Given a smooth  projective complex curve $C$ and a fixed point $q$ in $C$, the Abel map of $C$ is the map taking a point $p$ of $C$ to the point of the Jacobian $J(C)$ of $C$ corresponding to the divisor $p-q$. It is a very well-known fact that the Abel map is an embedding when $C$ is non-rational.
Now, if $A$ is an abelian subvariety of $J(C)$,  composing the Abel map of $C$ with the norm map of $A$ 
we get the Abel-Prym map
 $\varphi_A: C\rightarrow A$. 
It is easy to see that the image of $\varphi_A$ is a subcurve of $A$ generating $A$. The question of finding bounds for the genus of subcurves of abelian varieties has been considered, for  instance, in  \cite{bcv}, \cite{debarre} and \cite{jow}.  However, not much is known in general about the map $\varphi_A$.

There are few  cases in the literature where the degree of $\varphi_A$ is calculated. 
One case that has been studied is when $A$ is a Prym variety of a covering of smooth projective curves. 
The first of such cases was that of a 
double covering $C\rightarrow C'$ that is either \'etale or ramified at two points. In this case, Mumford showed in \cite{mumford} that $\varphi_A$ has degree 2 if $C$ is hyperelliptic, and degree 1 otherwise. 
More recently, Lange and Ortega considered the case where the covering is cyclic,
in \cite{ortega2}, and non-cyclic of degree 3, in \cite{ortega1}. They showed that, under some numerical conditions,  the Abel-Prym map of $A$ has degree 1 or 2.
In \cite{Brambila}, Brambila-Paz, G\'omez-Gonz\'alez and Pioli constructed a Prym-Tyurin variety $A'$  isogenous to $A$  and, again under some conditions, showed that the Abel-Prym map of $A'$ has degree 1. 
In \cite{lrprym}, Lange and Recillas considered Prym varieties associated to  pairs of coverings of smooth projective curves and obtained Abel-Prym maps of degree 1.
Finally, in \cite{PT3}  Lange, Recillas and Rojas used correspondences to construct   a Prym-Tyurin variety $A$ of exponent $3$ having Abel-Prym map of degree 1. 
Note that in all the cases above, the degree of the Abel-Prym map is one or two. 
However,  as our results show, this is not always  the case.

\subsection{Main Results}

The degree of an  Abel-Prym map  seems to be rather difficult to understand in full generality. 
In this work we focus on Abel-Prym maps for Prym-Tyurin components of the isotypical decomposition of a Jacobian variety.

First in Section \ref{sec:genAPmap} we define the Abel-Prym map.  Using a result of Debarre \cite{debarre}, we obtain in Theorem \ref{prop:upperbound} an upper bound on the degree of a map from a curve to a polarized abelian variety, under some conditions:

\smallskip
\noindent{\bf Theorem A} \;
{\it Let $(A,\theta)$ be a polarized abelian variety of dimension $n$  and let  $d=\deg(\theta)$.
Let $\varphi:C\rightarrow A$ be a morphism, where $C$ is a smooth projective complex curve.
Assume that 
$\varphi_*[C]=\dfrac{k}{(n-1)!}\bigwedge^{n-1}\theta$ 
and that $\varphi(C)$ generates $A.$  Then
$$\deg(\varphi) \leq \dfrac{kd}{\sqrt[n]{d}}.$$
}\smallskip

Next, in Section \ref{sec:isot}, we consider a  smooth projective complex curve $C$ with an action of a finite group $G$ and
the isotypical decomposition 
$A_1\times\ldots\times A_n\rightarrow J(C)$
of its Jacobian, as introduced in \cite{decomp}. 
Now set $\varphi_i=\varphi_{A_i}$ and  $C_i=\varphi_i(C)$ so that we have Abel-Prym maps
$$\varphi_i\: C\rightarrow C_i,$$
for each $i=1,\ldots,n$. First we show in Proposition \ref{prop:dim1} that
if $\dim(A_i)=1$ then the degree of $\varphi_i$ is equal to the exponent of $A_i$ in $J(C)$.
To deal with the components of higher dimension we let
  $K_i$ be the kernel of the  	representation of $G$ associated to the component $A_i$,  and consider the quotient curve  $\tilde C_i=C/K_i$ with quotient morphism
$$\psi_i\: C\rightarrow \tilde C_i.$$
Using the map $\psi_i$,
we obtain a lower bound on the degree of the Abel-Prym map $\varphi_i$ and show that, under some conditions, this bound is sharp.
More precisely, in Theorems \ref{propf} and \ref{thm:conj} we show:

\smallskip
\noindent{\bf Theorem B} \;
{\it
With the above notation, 
there is a morphism $f_i\colon \tilde C_i\rightarrow C_i$ such that $\varphi_i=f_i\circ \psi_i$. In particular  
$$\deg(\varphi_i)\geq|K_i|.$$

Moreover, if $A_i$ is a Prym-Tyurin variety of exponent $e(A_i)$ for  $C$, we have:
\begin{enumerate}[(i)]
\item $\deg(f_i)\leq \dfrac{e(A_i)}{|K_i|}$. In particular, if $e(A_i)=|K_i|$ then $f_i$ is a normalization and $\deg(\varphi_i)=|K_i|.$
\item If $\psi_i^*$ is an embedding then $A_i$ is a Prym-Tyurin variety of exponent $\dfrac{e(A_i)}{|K_i|}$ for $\tilde C_i$. Moreover, in this case,   $e(A_i)=|K_i|$ if and only if  $A_i=\psi_i^*(J(\tilde C_i))$.
\end{enumerate}
}\smallskip

In Section \ref{sec:examples} we apply these results to four cases. In the two initial examples, we consider specific actions of $\Z_2$ and $D_4$ on a smooth curve of genus $3$ and $4$, respectively. We compute the degrees of the Abel-Prym maps and show that the bounds obtained in Theorems \ref{prop:upperbound} and \ref{propf} are sharp.
In the last two examples we consider the isotypical decomposition of a Jacobian of a smooth curve with an action of the dihedral group $D_p$, for $p$ an odd prime, and the quaternions group $Q_8$, following \cite{ccr} and \cite{decomp}. We then use our results to compute or give bounds for the degrees of the Abel-Prym maps in these cases, 
under some hypotheses on the action.

\section{Technical background} \label{sec:back}

In this paper we always work over the field of complex numbers $\mathbb{C}$.
For most of the following notations, definitions and results concerning abelian varieties we follow \cite{lange}. 

Let $(A,\theta)$ be a (polarized) abelian variety of dimension $n$. If $(d_1,\ldots,d_n)$ is the \emph{type} of $\theta$ then the \emph{degree} of the polarization $\theta$ is $\deg(\theta)=d_1\cdot\ldots\cdot d_n$.
We denote by $(\hat A,\theta_\delta)$ the \emph{dual abelian variety} of $A$. 
Recall that the map $\Phi_\theta\colon A\rightarrow \hat A$ given by $\Phi_\theta(x)=t_x^*\theta\otimes \theta^{-1}$ is an isogeny of degree $\deg(\Phi_\theta)=(\deg(\theta))^2$, where $t_x$ is the translation by $x$ in $A$.  The \emph{exponent} $e(\theta)$  of $\theta$ is the exponent of the isogeny $\Phi_\theta$, that is, the exponent of the kernel of $\Phi_\theta$.
Denote by $\Psi_\theta\colon \hat A\rightarrow A$ the unique isogeny such that $\Phi_\theta\circ \Psi_\theta=e(\theta)id_A.$

Let $A'$ be an abelian subvariety of $(A,\theta)$, with canonical embedding $i\colon A'\rightarrow A$. The \emph{exponent} of $A'$ in $(A,\theta)$  (or simply in $A$, when there is no possible ambiguity) is $e(A')=e(i^*\theta)$. Moreover, the \emph{norm endomorphism} of $A$ associated to $A'$ is the composition
$N_{A'}=i\circ\Psi_{i^*\theta}\circ \hat i\circ \Phi_\theta,
$
where $\hat i$ is the dual homomorphism of $i$.
By abuse of notation we will also denote by $N_{A'}$ the composition 
$$N_{A'}=\Psi_{i^*\theta}\circ \hat i\circ \Phi_\theta\colon A\rightarrow A'.$$

A \textit{curve} $C$ is a connected,
projective and reduced scheme of dimension $1$ over $\mathbb{C}$. 
The \emph{genus} of $C$ is $g(C):=\dim H^1(C,\Ocal_C)$.
For a smooth curve $C$, we denote by $J(C)$  the \emph{Jacobian variety}  of $C$ and by $\Theta_C$ its theta divisor. 
Recall that the pair $(J(C),\Theta_C)$ is a \emph{principally polarized abelian variety}, that is, $\deg(\Theta_C)=1$. 
Moreover, recall that the \emph{Abel map} $\alpha\colon C\rightarrow J(C)$ is an embedding.

A principally polarized abelian variety $(A,\theta)$ is a \emph{Prym-Tyurin} variety for a smooth curve $C$ if $A$ is (isomorphic to) a subvariety of $J(C)$ and
$i^*\Theta_C\equiv e\theta$, where $i\colon A\rightarrow J(C)$ is the inclusion map and $\equiv$ means algebraic equivalence. Note that in this case $e(A)=e$.

\section{Abel-Prym maps}\label{sec:genAPmap}

Let $C$ be a smooth non-rational curve and let $(J(C),\Theta_C)$ be its principally polarized Jacobian.
If $A$ is an abelian subvariety of $J(C)$, we define the \emph{Abel-Prym map} of $A$, 
$$\varphi: C \rightarrow A,$$
as the composition 
$\varphi=N_A \circ \alpha$
of the Abel map with the norm map of $A$. 

\begin{lemma}\label{lem:CAsubcurve}
Let $C$ be a smooth non-rational curve and let $\varphi: C \rightarrow A$
be the Abel-Prym map to an abelian subvariety  $A$ of $J(C)$. 
Then the image $C_A := \varphi(C)$
is a subcurve of $A$ generating $A$. In particular, $g(C_A)\geq \dim (A)$.
\end{lemma}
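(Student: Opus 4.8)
The plan is to reduce the statement to two classical facts: that the image of the Abel map generates the Jacobian, and that a curve generating an abelian variety of dimension $n$ has genus at least $n$. The bridge between them is the observation that the norm map $N_A$ is a surjective homomorphism.

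First I would record that $N_A\colon J(C)\to A$ is surjective. From the definition of the norm endomorphism one has $N_A\circ i=e(A)\,\mathrm{id}_A$, where $i\colon A\to J(C)$ is the inclusion and $e(A)$ the exponent; since multiplication by $e(A)$ on $A$ is an isogeny, the restriction of $N_A$ to $i(A)\subseteq J(C)$ is already surjective onto $A$, so $N_A$ is surjective. Next I would invoke the classical fact that $\alpha(C)$ generates $J(C)$ (the Abel map realizes $J(C)$ as the Albanese variety of $C$, whose image always generates).

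The core step is to transport the generating property across the surjection $N_A$. Suppose, for contradiction, that $C_A=N_A(\alpha(C))$ were contained in a translate $a_0+B$ of a proper abelian subvariety $B\subsetneq A$. Writing $a_0=N_A(x_0)$ by surjectivity gives $\alpha(C)\subseteq x_0+N_A^{-1}(B)$. Let $\tilde B\subseteq J(C)$ be the connected component of the identity of $N_A^{-1}(B)$; it is an abelian subvariety with $\dim\tilde B=\dim B+\dim\ker N_A<\dim J(C)$, and $N_A^{-1}(B)$ is a finite union of translates of $\tilde B$. Since $\alpha(C)$ is irreducible it lands in a single such translate, so $\alpha(C)$ is contained in a translate of the proper subvariety $\tilde B$, contradicting that $\alpha(C)$ generates $J(C)$. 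Hence $C_A$ generates $A$. Because $\dim A\geq 1$, this forces $C_A$ to have positive dimension, while being the image of a curve it has dimension at most one; thus $C_A$ is a subcurve of $A$.

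Finally, the genus bound follows from the universal property of the Jacobian of the normalization. Let $\nu\colon\tilde C_A\to C_A$ be the normalization and $j\colon\tilde C_A\to A$ the composite of $\nu$ with the inclusion $C_A\hookrightarrow A$; its image $C_A$ generates $A$. Fixing a base point and using that $j$ factors, after translation, through the Abel map of $\tilde C_A$ as $\tilde C_A\to J(\tilde C_A)\xrightarrow{h}A$ with $h$ a homomorphism, the generating property forces $h$ to be surjective. Therefore $\dim A\leq\dim J(\tilde C_A)=g(\tilde C_A)$, the geometric genus of $C_A$, which is at most its arithmetic genus $g(C_A)=\dim H^1(C_A,\Ocal_{C_A})$, giving $g(C_A)\geq\dim A$. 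The main obstacle is the middle step: making the notion of "generating" precise and verifying that it is preserved under $N_A$ via the connected-component analysis of $N_A^{-1}(B)$; the surjectivity of $N_A$ and the genus inequality are then routine.
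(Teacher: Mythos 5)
Your proof is correct and follows essentially the same route as the paper: push the generation of $J(C)$ by $\alpha(C)$ forward through the surjective norm map $N_A$, then get the genus bound from the surjection $J(C_A')\rightarrow A$ supplied by the universal property of the Jacobian of the normalization. The only difference is one of detail: where the paper disposes of the generation statement in one line, you spell out the surjectivity of $N_A$ (via $N_A\circ i = e(A)\,\mathrm{id}_A$) and the connected-component analysis of $N_A^{-1}(B)$, both of which are correct.
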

\begin{proof}
The first two assertions follow from the fact that  $\alpha(C)$ is isomorphic to $C$ and  generates $J(C)$, and the dual $\widehat{i}_A$ of the inclusion $i_A\colon A\hookrightarrow J(C)$ is surjective onto its image.

For the last assertion we consider the normalization  map $\nu_A\:C_A'\rightarrow C_A$ and the  composition $f_A\:C_A'\rightarrow A$ with the inclusion of $C_A$ in $A$. Since the image of $f_A$ generates $A$, then the map $\tilde f_A\:J(C_A')\rightarrow A$ induced by the universal property is surjective and thus $\dim (A)\leq g(C_A')\leq g(C_A)$.
\end{proof}

By  \cite[Prop. II.6.8]{hartshorne}, the surjective morphism
\begin{eqnarray}\label{eq:varphiA}
\varphi_A:C\rightarrow C_A
\end{eqnarray}
given by $\varphi_A=\varphi$
is finite  and the curve $C_A$ is complete, although possibly singular.
It is not easy in this generality to determine when $C_A$ is smooth, or to compute its genus. 
Under some conditions, we obtain in Theorem \ref{prop:upperbound} an upper bound of the degree of $\varphi_A$. Moreover, in the case where $A$ is  is one of the components of an isotypical decomposition of $J(C)$, we will obtain in Theorem \ref{thm:conj} a lower bound. First, we need a definition.

The \emph{minimal class} of  a polarized abelian variety  $(A,\theta)$  of dimension $n$ is defined as
$$ z_\theta=\frac{1}{(n-1)!}\bigwedge^{n-1}\theta.$$

\begin{lemma}\label{lem:debarre}
Let $C$ be an irreducible curve on a polarized abelian variety $(A,\theta)$ with $\dim(A)=n$ and $\deg(\theta)=d.$ Assume that $C$ generates $A$ and that the class of $C$ in $A$ is $[C]=k\,z_\theta$.
Then 
$$
k\geq\frac{\sqrt[n]{d}}{d}.
$$
\end{lemma}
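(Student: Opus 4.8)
The plan is to translate the hypothesis on the numerical class $[C]=k\,z_\theta$ into information about the degree $\deg_\theta(C)=[C]\cdot\theta$ of $C$ with respect to the polarization, and then to feed this into Debarre's lower bound for the $\theta$-degree of a curve generating an abelian variety.

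First I would compute $\deg_\theta(C)$. By Riemann--Roch for abelian varieties the top self-intersection of the polarization is $\bigwedge^{n}\theta=n!\,\deg(\theta)=n!\,d$. Intersecting $[C]=\dfrac{k}{(n-1)!}\bigwedge^{n-1}\theta$ with $\theta$ then gives
$$\deg_\theta(C)=[C]\cdot\theta=\frac{k}{(n-1)!}\Bigl(\bigwedge^{n-1}\theta\Bigr)\cdot\theta=\frac{k}{(n-1)!}\bigwedge^{n}\theta=\frac{k\,n!\,d}{(n-1)!}=k\,n\,d.$$

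The decisive step is then Debarre's theorem on degrees of curves in abelian varieties: an irreducible curve generating a polarized abelian variety $(A,\theta)$ of dimension $n$ satisfies $\deg_\theta(C)\ge n\sqrt[n]{d}$, with $d=\deg(\theta)$. Since $C$ is irreducible and generates $A$ by hypothesis, this applies, and combining it with the computation above yields $k\,n\,d\ge n\sqrt[n]{d}$. Dividing through by $n\,d$ gives exactly $k\ge \dfrac{\sqrt[n]{d}}{d}$, which is the assertion. Note that when $d=1$ this recovers the familiar statement that a generating curve in a principally polarized abelian variety has class at least the minimal class $z_\theta$.

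The genuinely substantive ingredient is Debarre's inequality, which I would cite rather than reprove. For orientation I would recall its mechanism: because $C$ generates $A$, the sum morphism $\mu\colon C^{\,n}\to A$, $(p_1,\dots,p_n)\mapsto p_1+\cdots+p_n$, is surjective and generically finite, so $\bigl(\mu^*\theta\bigr)^{n}$ equals $\deg(\mu)$ times $\bigwedge^{n}\theta$, a positive multiple of $n!\,d$; on the other hand the restriction of $\mu^*\theta$ to each factor of the product $C^{\,n}$ has degree $\deg_\theta(C)$, and a Hodge--Riemann/arithmetic--geometric-mean estimate bounds $(\mu^*\theta)^{n}$ in terms of $(\deg_\theta C)^{n}$, which rearranges to the stated inequality. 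The delicate point --- and the reason I would import the result --- is the control of the off-diagonal correlation terms of $\mu^*\theta$ on $C^{\,n}$; the remainder of the lemma is just the two lines of bookkeeping above.
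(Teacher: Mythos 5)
Your proof is correct and follows exactly the paper's argument: compute $[C]\cdot\theta = knd$ via Riemann--Roch ($\theta^n = n!\,d$), then apply Debarre's lower bound $[C]\cdot\theta \geq n\sqrt[n]{d}$ for irreducible generating curves, and divide. The extra sketch of the mechanism behind Debarre's inequality is fine but not needed, since both you and the paper import that result by citation.
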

\begin{proof}
By  \cite[Prop. 4.1]{debarre}, the intersection product of the class $[C]$ with the polarization $\theta$ satisfies
$[C]\cdot \theta \geq n\sqrt[n]{d}$. 
On the other hand, recall that
 by Rieman-Roch and \cite[Prop. 5.2.3]{lange} we have ${\theta^{n}}=d\,{n!}$.
Thus this intersection product is
$[C]\cdot \theta = kdn$ and the result follows.
\end{proof}

\begin{theorem}\label{prop:upperbound}
Let $(A,\theta)$ be a polarized abelian variety of dimension $n$  and let $\varphi:C\rightarrow A$ be a morphism, where $C$ is a smooth curve.
Assume that 
$\varphi_*[C]=k\,z_\theta$ 
and that $\varphi(C)$ generates $A.$  Then
$$\deg(\varphi_A) \leq \dfrac{kd}{\sqrt[n]{d}}$$
where $d=\deg(\theta)$.
\end{theorem}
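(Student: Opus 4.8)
The plan is to reduce the statement directly to Lemma \ref{lem:debarre} applied to the image curve $C_A=\varphi(C)$. First I would factor $\varphi$ through its image, writing $\varphi=i\circ\varphi_A$, where $\varphi_A\colon C\rightarrow C_A$ is the surjective finite morphism of \eqref{eq:varphiA} and $i\colon C_A\hookrightarrow A$ is the inclusion. Since $C$ is a smooth curve it is connected and hence irreducible, so its image $C_A$ is irreducible as well; moreover $C_A$ generates $A$ by hypothesis. Thus $C_A$ satisfies the standing assumptions of Lemma \ref{lem:debarre}.

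Next I would compute the pushforward class. By functoriality of the pushforward of cycles under proper morphisms, $\varphi_*[C]=i_*\big((\varphi_A)_*[C]\big)$, and since $\varphi_A$ is finite of degree $\deg(\varphi_A)$ onto the irreducible curve $C_A$, one has $(\varphi_A)_*[C]=\deg(\varphi_A)\,[C_A]$. Combining this with the hypothesis $\varphi_*[C]=k\,z_\theta$ gives
$$\deg(\varphi_A)\,[C_A]=k\,z_\theta,$$
so that the class of $C_A$ in $A$ is $[C_A]=\dfrac{k}{\deg(\varphi_A)}\,z_\theta$.

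Finally I would apply Lemma \ref{lem:debarre} to $C_A$, with coefficient $\dfrac{k}{\deg(\varphi_A)}$ in place of $k$, obtaining $\dfrac{k}{\deg(\varphi_A)}\geq\dfrac{\sqrt[n]{d}}{d}$; rearranging yields $\deg(\varphi_A)\leq\dfrac{kd}{\sqrt[n]{d}}$, which is the claim. I expect no serious obstacle here, since the entire geometric content is already packaged in Lemma \ref{lem:debarre}. The only point requiring care is the identification $(\varphi_A)_*[C]=\deg(\varphi_A)\,[C_A]$, namely that the pushforward of the fundamental class records exactly the degree of the map onto its image; one should also note that $\varphi(C)$ is genuinely a curve rather than a point, which is automatic since it generates the positive-dimensional abelian variety $A$.
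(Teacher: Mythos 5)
Your proposal is correct and follows essentially the same route as the paper: factor $\varphi$ through its image, use $(\varphi_A)_*[C]=\deg(\varphi_A)\,[C_A]$ to get $[C_A]=\tfrac{k}{\deg(\varphi_A)}\,z_\theta$, and apply Lemma \ref{lem:debarre}. The only cosmetic difference is that the paper factors through the normalization $C_A'$ of $C_A$ (writing $\varphi=g\circ h$ and using $g_*[C_A']=[C_A]$), while you work with $C_A$ directly; the pushforward identity you invoke holds for any finite surjective morphism onto the irreducible image, so nothing is lost.
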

\begin{proof}
Let $C_A'$ be the normalization of $C_A=\varphi(C)$ and let 
$h: C \rightarrow C_A'$ be the induced morphism. Then 
  $\varphi=g \circ h$,
where 
$g:C_A'\rightarrow A$ is the normalization map of $C_A$ composed with the inclusion of $C_A$ in $A$.
Then $\deg(\varphi_A)=\deg(h)$ and 
$$\varphi_{\ast}[C] 
=g_{\ast}( h_{\ast}[C])
= g_{\ast}(deg(h)[h(C)])
=deg(\varphi_A)g_{\ast}[C_A'].$$ 
Therefore, 
$$g_{\ast}[C_A']=\frac{k}{deg(\varphi_A)}\, z_\theta,$$ 
and since $g_{\ast}[C_A']=[C_A]$, we have by Lemma \ref{lem:debarre}
$$\frac{k}{deg(\varphi_A)}\geq \frac{\sqrt[n]{d}}{d}$$ 
and the result follows.
\end{proof}

Note that if $A$ is a Prym-Tyurin variety of exponent $k$  for $C$ then, by Welter's criterium and the previous proposition, we have $\deg(\varphi_A) \leq k,$ since $\deg(\theta)=1.$

\begin{proposition}\label{prop7}
Let $(A,\theta)$ be a principally polarized abelian variety and let $\varphi:C\rightarrow A$ be a morphism, where $C$ is a smooth curve.	Assume that $\varphi^*$ is an embedding and  
$\varphi_*[C]=k\,z_\theta$.  
Then $\deg(\varphi_A)=k$ if and only if $\varphi(C)$ is smooth and $A=J(\varphi(C))$.
In addition, if $\deg(\varphi_A)=1$ then $A=J(C)$.
\end{proposition}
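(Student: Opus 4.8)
The plan is to combine the intersection-theoretic computation already carried out in the proof of Theorem~\ref{prop:upperbound} with Matsusaka's criterion characterizing Jacobians by the minimal class. Since $(A,\theta)$ is principally polarized we have $d=\deg(\theta)=1$, so $z_\theta$ is exactly the minimal class and Theorem~\ref{prop:upperbound} already gives $\deg(\varphi_A)\le k$. First I would record that the hypothesis that $\varphi^*$ is an embedding forces $\varphi(C)=C_A$ to generate $A$: if $C_A$ generated a proper abelian subvariety $A'\subsetneq A$, then $\varphi$ would factor through $A'$ and $\varphi^*$ through the dual $\hat A\to\hat{A'}$, whose kernel is positive-dimensional, contradicting injectivity. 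With generation in hand I can rerun the computation from Theorem~\ref{prop:upperbound}: writing $\varphi=g\circ h$ with $h\colon C\to C_A'$ the induced morphism to the normalization and $g\colon C_A'\to A$, one gets $k\,z_\theta=\varphi_*[C]=\deg(\varphi_A)\,[C_A]$. This yields the crucial reformulation: $\deg(\varphi_A)=k$ \emph{if and only if} $[C_A]=z_\theta$, i.e.\ $C_A$ has minimal class in $A$.

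For the easy implication I would assume $C_A=\varphi(C)$ is smooth and $A=J(C_A)$. Then, up to translation, the inclusion $C_A\hookrightarrow A=J(C_A)$ is the Abel embedding, so $[C_A]$ is the minimal class $z_{\Theta_{C_A}}=z_\theta$, and by the reformulation above this gives $\deg(\varphi_A)=k$. The substantial implication is the converse: assuming $\deg(\varphi_A)=k$, the reformulation yields $[C_A]=z_\theta$, so that $C_A$ is an irreducible, reduced curve of minimal class generating the principally polarized $(A,\theta)$. At this point Matsusaka's criterion (see \cite{lange}) applies and shows that $C_A$ is smooth and that $(A,\theta)$ is isomorphic to $(J(C_A),\Theta_{C_A})$, i.e.\ $A=J(\varphi(C))$. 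Invoking Matsusaka's criterion, after checking its hypotheses (irreducibility of $C_A$, which follows from that of $C$, and generation, established above), is the main conceptual step; the rest is bookkeeping.

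Finally, for the additional claim I would start from $\deg(\varphi_A)=1$, so that $h\colon C\to C_A'$ is a degree-one morphism of smooth projective curves, hence an isomorphism $C\cong C_A'$. The strategy is to reduce to the first part by proving $k=1$: once $k=\deg(\varphi_A)=1$, the first part gives that $C_A$ is smooth and $A=J(C_A)$, and then the birational morphism $\varphi\colon C\to C_A$ onto the \emph{smooth} curve $C_A$ is an isomorphism, whence $A=J(C_A)\cong J(C)$. To prove $k=1$ I would use the embedding hypothesis in the form that the induced surjection $\tilde\varphi\colon J(C)\to A$ (where $\varphi=\tilde\varphi\circ\alpha$) has \emph{connected} kernel $B$, equivalently $A=J(C)/B$ with $\hat A$ realized as the complementary subvariety $B^\perp\subset J(C)$. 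The hard part---and the main obstacle of the whole proposition---is exactly this step: deducing the minimal class $k=1$ (which also forces $B=0$) from the generic injectivity of $\varphi$ together with the connectedness of $B$. That the embedding hypothesis cannot be dropped is shown by the Prym variety of an étale double cover, where $\deg(\varphi_A)=1$ while $k=2$ and $A\neq J(C)$; there the norm map has disconnected kernel, so $\varphi^*$ fails to be an embedding, and it is precisely the connectedness of $\ker\tilde\varphi$ that excludes such behaviour.
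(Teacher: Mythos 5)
For the main equivalence your argument is correct and is essentially the paper's own: the identity $\varphi_*[C]=\deg(\varphi_A)\,[C_A]$ extracted from the proof of Theorem~\ref{prop:upperbound} reduces the claim to $[C_A]=z_\theta$, and Matsusaka's criterion converts that into ``$C_A$ smooth and $A=J(C_A)$''; your explicit check that $\varphi(C)$ generates $A$ and the use of Poincar\'e's formula for the easy direction are fine (the paper leaves both points implicit).

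The genuine gap is the final assertion. You reduce ``$\deg(\varphi_A)=1\Rightarrow A=J(C)$'' to proving $k=1$, and then leave exactly that step open, calling it ``the hard part''. So the one claim that does not follow formally from the first part is not proved at all; the paper settles it by citing \cite[Cor.~12.2.6]{lange} (a Prym--Tyurin variety of exponent $1$ for $C$ is isomorphic to $J(C)$), a statement whose application presupposes $k=1$.

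Moreover, the route you sketch cannot be completed, because the example you offer in its support is analyzed incorrectly. For the Prym variety $P$ of an \'etale double cover $\pi\colon C\to C'$, the disconnected kernel belongs to the norm map $\mathrm{Nm}_\pi\colon J(C)\to J(C')$ of the \emph{covering}; the map governing $\varphi^*$ is instead the norm map $N_P=\Psi_{i^*\Theta_C}\circ\hat i\circ\Phi_{\Theta_C}\colon J(C)\to P$ of the \emph{subvariety}, and its kernel is connected: since $i^*\Theta_C\equiv 2\Xi$ with $\Xi$ principal, $\Psi_{i^*\Theta_C}=\Phi_\Xi^{-1}$ is an isomorphism, so $\ker N_P=\Phi_{\Theta_C}^{-1}(\ker\hat i)$, and $\ker\hat i\cong\widehat{J(C)/P}$ is connected. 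Hence $\widehat{N_P}$, and with it $\varphi^*=\alpha^*\circ\widehat{N_P}$, is injective: the Abel--Prym map of \emph{any} Prym--Tyurin subvariety satisfies the embedding hypothesis (this is the forward direction of Welters' criterion). Consequently the classical \'etale Prym of a non-hyperelliptic curve satisfies all hypotheses of Proposition~\ref{prop7} with $k=2$, has $\deg(\varphi_A)=1$ by Mumford \cite{mumford}, and yet $P\neq J(C)$; its image is a curve of twice the minimal class, cf.\ \cite{welters}. So the implication ``$\deg(\varphi_A)=1$ plus connectedness of $\ker\tilde\varphi$ $\Rightarrow k=1$'' that your plan needs is false, no argument along your lines can close the gap, and the conclusion $A=J(C)$ is really only available under the hypothesis $k=1$ --- the form in which \cite[Cor.~12.2.6]{lange} is actually used elsewhere in the paper (Theorem~\ref{thm:conj}(ii)).
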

\begin{proof}
Using notation of the proof of Theorem \ref{prop:upperbound}, we note that 
$\deg(\varphi_A)=k$ if and only if $[C_A]=z_\theta$. By   Matsusaka's criterion (cf. \cite[Remark 12.2.5]{lange}), this happens if and only if $C_A$ is smooth and $A=J(C_A)$. 
The last assertion follows from \cite[Cor. 12.2.6]{lange}.
\end{proof}

The following result is a direct consequence of \cite[Lemma {12.3.1}]{lange}. For the sake of the completeness, we include it here.

\begin{proposition}	\label{prop:morfPT}
Let $f\colon C\rightarrow C'$ be a finite morphism of smooth curves and assume that the pullback $f^*\colon J(C')\rightarrow J(C)$ is an embedding. Then $J(C')$ is a Prym-Tyurin variety of exponent $\deg(f)$ for $C$.
\end{proposition}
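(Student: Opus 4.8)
The plan is to check directly the two defining conditions of a Prym-Tyurin variety for the principally polarized abelian variety $(J(C'),\Theta_{C'})$, using $f^*$ as the embedding. Write $d=\deg(f)$ and $\iota:=f^*\colon J(C')\to J(C)$. Since $\iota$ is an embedding by hypothesis, $J(C')$ is isomorphic to an abelian subvariety of $J(C)$, so the only thing left to establish is the relation between theta divisors, namely
$$\iota^*\Theta_C\equiv d\,\Theta_{C'}.$$

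First I would translate this algebraic-equivalence statement into an equality of the associated homomorphisms to the dual. Recall that the homomorphism $\Phi_L$ attached to a line bundle $L$ depends only on the algebraic equivalence class of $L$, that $\Phi_{L^{\otimes m}}=m\,\Phi_L$, and that $\Phi_L=\Phi_{L'}$ forces $L\equiv L'$; hence it suffices to prove $\Phi_{\iota^*\Theta_C}=d\,\Phi_{\Theta_{C'}}$. Using the functoriality $\Phi_{\iota^*\Theta_C}=\widehat\iota\circ\Phi_{\Theta_C}\circ\iota$, everything reduces to understanding the dual homomorphism $\widehat\iota=\widehat{f^*}$.

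The key step, and the point at which \cite[Lemma 12.3.1]{lange} enters, is the identification of $\widehat{f^*}$ with the norm map $\mathrm{Nm}_f\colon J(C)\to J(C')$ under the canonical isomorphisms $\Phi_{\Theta_C}\colon J(C)\xrightarrow{\sim}\widehat{J(C)}$ and $\Phi_{\Theta_{C'}}\colon J(C')\xrightarrow{\sim}\widehat{J(C')}$ furnished by the principal polarizations; concretely, $\widehat{f^*}\circ\Phi_{\Theta_C}=\Phi_{\Theta_{C'}}\circ\mathrm{Nm}_f$, so that pullback and norm are adjoint with respect to the two theta polarizations. This is the part I expect to be the genuine obstacle, as it encodes the compatibility of $f$ with both principal polarizations simultaneously; it is, however, precisely what the cited lemma delivers.

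Granting this, the proof finishes by a substitution together with the elementary identity $\mathrm{Nm}_f\circ f^*=d\cdot\mathrm{id}_{J(C')}$, valid because the norm of a pulled-back class is that class raised to the degree. Then
$$\Phi_{\iota^*\Theta_C}=\widehat{f^*}\circ\Phi_{\Theta_C}\circ f^*=\Phi_{\Theta_{C'}}\circ\mathrm{Nm}_f\circ f^*=d\,\Phi_{\Theta_{C'}},$$
and the correspondence between $\Phi$ and algebraic equivalence yields $\iota^*\Theta_C\equiv d\,\Theta_{C'}$. This is exactly the assertion that $J(C')$ is a Prym-Tyurin variety of exponent $d=\deg(f)$ for $C$, and in particular $e(J(C'))=\deg(f)$.
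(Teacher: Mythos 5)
Your proof is correct, but it takes a genuinely different route from the paper's. The paper argues via Welters' criterion: it forms $h:=\alpha_{C'}\circ f\colon C\rightarrow J(C')$, notes that $h^*=f^*\circ\alpha_{C'}^*$ is an embedding since $\alpha_{C'}^*$ is an isomorphism, computes $f_*[C]=\deg(f)\,[C']$, and invokes Poincar\'e's formula \cite[Prop.~11.2.1]{lange} to identify $[C']=z_{\Theta_{C'}}$, so that $h_*[C]$ is $\deg(f)$ times the minimal class. You instead verify the defining relation $(f^*)^*\Theta_C\equiv\deg(f)\,\Theta_{C'}$ head-on, combining the functoriality $\Phi_{\iota^*\Theta_C}=\widehat{\iota}\circ\Phi_{\Theta_C}\circ\iota$, the duality $\widehat{f^*}\circ\Phi_{\Theta_C}=\Phi_{\Theta_{C'}}\circ\mathrm{Nm}_f$ between pullback and norm, and the identity $\mathrm{Nm}_f\circ f^*=\deg(f)\,\mathrm{id}$; all three ingredients are standard and the deduction from them is sound. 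In effect you re-prove \cite[Lemma~12.3.1]{lange}, whose statement (as the paper quotes it in the proof of Proposition~\ref{prop:etale}) is precisely your conclusion $(f^*)^*\Theta_C\equiv\deg(f)\,\Theta_{C'}$; so your one slip is attributional, since the cited lemma delivers the \emph{output} of your computation rather than the adjoint relation you single out as the key obstacle (that relation belongs to the norm-map formalism elsewhere in \cite{lange}). This is a citation quibble, not a gap: quoting the lemma as the paper does would collapse your argument to one line, while not quoting it makes your argument a self-contained proof of it. As for what each route buys: the paper's proof stays in the homological language of curve classes and minimal classes, which is the machinery running through the rest of the paper (Theorem~\ref{prop:upperbound}, Proposition~\ref{prop7}); yours is purely polarization-theoretic, needs neither Poincar\'e's formula nor Welters' criterion, and makes the exponent $\deg(f)$ appear transparently from $\mathrm{Nm}_f\circ f^*=\deg(f)\,\mathrm{id}$. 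It is worth noting that the paper's own preamble announces the proposition as a ``direct consequence'' of Lemma~12.3.1, which is exactly the route you took, even though the written proof then proceeds through Welters' criterion instead.
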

\begin{proof}
We'll apply Welters' criterion. Consider the composition 
$$h:=\alpha_{C'}\circ f\colon C\rightarrow J(C'),$$ 
where $\alpha_{C'}$ is the Abel map of $C'$. Then $h^*=f^*\circ \alpha_{C'}^*$ is an embedding, since $\alpha_{C'}^*$ is an isomorphism. Moreover, if $\deg(f)=q$ then $f_*[C]=q[C']$.
This completes the proof, since by  Poincar\'e's formula \cite[Prop. 11.2.1]{lange} we have $[C']=z_{\Theta_{C'}}$.
\end{proof}

We remark that the hypothesis of $f^*$ being an embedding is fundamental in Proposition \ref{prop:morfPT}.
Indeed, consider $f:C\to C'$ a non-ramified morphism of degree $2$ between a curve $C$ of genus $3$ and $C'$ of genus $2$. By \cite[Theorem {12.3.3}]{lange}, the complementary subvariety $A$ of $f^*(J(C'))$ in $J(C)$ is a (classical) Prym variety, hence $A$ is a Prym-Tyurin variety of exponent $2$ for $C$. 
Therefore $f^*(J(C'))$ is not a Prym-Tyurin variety for $C$, since the polarization induced by $\Theta_C$ in $f^*(J(C'))$ is of type $(1,2)$, by \cite[Cor. {12.1.5}]{lange}.
 Note that, in this case, the exponent of $f^*(J(C'))$ as an abelian subvariety of $J(C)$ is 2, hence still equal to the degree of $f$.

One might then think that
the exponent of $f^*(J(C'))$ as an abelian subvariety of $J(C)$ would always equal to the degree of $f$. However, it is easy to see that this is not the case.
Let $f\:C\to C'$ be  a morphism  between a curve $C$ of genus $2$ and $C'$ of genus $1$. By \cite[Thm. 12.3.3(c) and Cor. 11.4.4]{lange}, then $f$ factors as $f=f_e\circ g$ where  $g\:C\rightarrow C'_e$ and $f_e\:C'_e\to C'$ with $C'_e$ of genus 1 and $f_e$  \'etale. Assume $f_e$ non-constant. Then 
the exponent of $f^*(J(C'))$ as an abelian subvariety of $J(C)$ is
$\deg(g)$, which is strictly smaller than $\deg(f)$.

By \cite[Prop. {11.4.3}]{lange}, the pullback map $f^*$ is injective if and only if $f$ does not factor via a cyclic \'etale covering of degree $\geq2$. This implies that, in the case of a cyclic \'etale covering $f\:C\rightarrow C'$, then $J(C')$ is not a Prym-Tyurin variety for $C.$ The next result shows that even if we consider the image $f^*(J(C'))$ of the pullback map, wich is already an abelian subvariety of $J(C)$, then we may still not have a Prym-Tyurin variety for $C$.

\begin{proposition}\label{prop:etale}
	Let $f\: C \rightarrow C'$  be a non-constant cyclic \'etale morphism of smooth curves, where $g(C')\geq 2$ and consider  the inclusion map  $i\:f^*(J(C'))\rightarrow J(C)$. If $\deg(f)\neq a^{g(C')}$ for some $a\in \mathbb{Z}$, then the induced polarization $i^*\Theta_C$ on $f^*(J(C'))$ is not a multiple of a principal polarization.
\end{proposition}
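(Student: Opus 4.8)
The plan is to pin down the \emph{degree} of the induced polarization $\mu:=i^*\Theta_C$ on the image $B:=f^*(J(C'))$, show it equals $n^{g-1}$ where $n=\deg(f)$ and $g=g(C')=\dim(B)$, and then invoke an elementary arithmetic observation: any multiple of a principal polarization on a $g$-dimensional abelian variety has degree a perfect $g$-th power, whereas $n^{g-1}$ is a perfect $g$-th power exactly when $n$ is one. So if $n\neq a^g$ for all $a\in\mathbb{Z}$, the polarization $\mu$ cannot be a multiple of a principal one.

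First I would factor the pullback homomorphism as $f^*=i\circ\psi$, where $\psi\colon J(C')\rightarrow B$ is the induced isogeny onto its image. Since $f$ is a connected cyclic \'etale covering of degree $n$, it is defined by a line bundle $\eta\in J(C')$ of exact order $n$ (the order equals the degree because $C$ is connected), and the kernel of $f^*$ is the cyclic subgroup $\langle\eta\rangle$ of order $n$. As $i$ is injective, this yields $\deg(\psi)=|\ker(f^*)|=n$.

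Next I would compute $\psi^*\mu$. Writing $\psi^*\mu=\psi^*i^*\Theta_C=(f^*)^*\Theta_C$, I use the functoriality $\Phi_{(f^*)^*\Theta_C}=\widehat{f^*}\circ\Phi_{\Theta_C}\circ f^*$ together with the fact that, under the canonical principal polarizations identifying $J(C)$ and $J(C')$ with their duals, one has $\Phi_{\Theta_C}=\mathrm{id}$ and $\widehat{f^*}=\mathrm{Nm}_f$. Hence $\Phi_{(f^*)^*\Theta_C}=\mathrm{Nm}_f\circ f^*=n\,\mathrm{id}_{J(C')}=\Phi_{n\Theta_{C'}}$, so $\psi^*\mu\equiv n\Theta_{C'}$, a polarization of type $(n,\ldots,n)$ and degree $n^{g}$. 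Combining this with the isogeny degree formula $\deg(\psi^*\mu)=\deg(\psi)\,\deg(\mu)$ gives $n^{g}=n\cdot\deg(\mu)$, that is, $\deg(\mu)=n^{g-1}$.

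Finally, if $\mu\equiv m\theta_0$ for some principal polarization $\theta_0$ on $B$ and some $m\in\mathbb{Z}$, then $\deg(\mu)=m^{g}$, forcing $n^{g-1}$ to be a perfect $g$-th power. Writing $n=\prod_p p^{v_p}$, the integer $n^{g-1}=\prod_p p^{(g-1)v_p}$ is a $g$-th power if and only if $g\mid(g-1)v_p$ for every $p$; since $\gcd(g,g-1)=1$ this is equivalent to $g\mid v_p$ for all $p$, i.e.\ to $n=a^{g}$ for some $a\in\mathbb{Z}$. Thus $n\neq a^{g}$ rules out $\mu$ being a multiple of a principal polarization. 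I expect the main obstacle to lie in the two intermediate identities—that $\ker(f^*)$ has order exactly $n$, and that $\psi^*\mu\equiv n\Theta_{C'}$—both of which rest on the duality $\widehat{f^*}=\mathrm{Nm}_f$ and the connectedness of $C$; the closing arithmetic is routine.
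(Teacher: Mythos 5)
Your proof is correct and takes essentially the same route as the paper's: factor $f^*$ through the isogeny onto its image (of degree $n$ because the covering is cyclic \'etale), use $(f^*)^*\Theta_C\equiv n\Theta_{C'}$, compare degrees multiplicatively under the isogeny to get $\deg(i^*\Theta_C)=n^{g(C')-1}$, and finish with the identical coprimality argument. The only cosmetic differences are that you reprove the identity $(f^*)^*\Theta_C\equiv n\Theta_{C'}$ via $\widehat{f^*}=\mathrm{Nm}_f$ where the paper simply cites Lemma 12.3.1 of Birkenhake--Lange, and you phrase the count in terms of polarization degrees rather than Euler characteristics.
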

\begin{proof}
For the sake of simplicity, set $\Theta=\Theta_C$,  $\Theta'=\Theta_{C'}$
and $g'=g(C')$.

Note that $f^*$ factors as $f^*=i\circ j$, where $j\:J(C')\rightarrow f^*(J(C'))$ is an isogeny. Set $n=\deg(f)$. 
By \cite[Lemma 12.3.1]{lange}, we have 
$(f^*)^*\Theta\equiv n\Theta'$. 
Hence the type of $(f^*)^*\Theta$ is $(n,\ldots,n)$ and by \cite[Theorem 3.6.1]{lange}, we have $$\chi((f^*)^*\Theta)=(-1)^dn^{g'}$$
for some $d\in\Z$.
Now, $(f^*)^*=j^*\circ i^*$ and by \cite[Cor. 3.6.6]{lange} we have
$$\chi((f^*)^*\Theta)=\chi(j^*(i^*\Theta))=\deg(j)\chi(i^*\Theta).$$
Since $f$ is \'etale of degree $n$, then the isogeny  $j$ also has degree $n$ and we get that
\begin{eqnarray}\label{eq:chi}
\chi(i^*\Theta)=(-1)^dn^{g'-1}.
\end{eqnarray}
	
If $i^*\Theta$ is a multiple of a principal polarization on $f^*(J(C'))$, then it is of  type $(m,\ldots,m)$ for some $m\in\Z$ and again by \cite[Theorem 3.6.1]{lange}, we have $\chi(i^*\Theta)=(-1)^{ d'} m^{g'}$ for some $ d'\in\Z$. Thus, (\ref{eq:chi}) gives us
$$m^{g'}=n^{g'-1}.$$
We need to show this can only happen when $n=a^{g'}$ for some $a\in\Z$. Let $n=p_1^{r_1}\ldots p_k^{r_k}$ be the prime decomposition of $n$. Then $$n^{g'-1} = p_1^{r_1(g'-1)}\ldots p_k^{r_k(g'-1)}$$
and for this to be of the form $m^{g'}$ we must have
$$r_i(g'-1)=s_i g'$$
for some $s_i\in\Z$, for every $i=1,\ldots,k$.
Since $g'$ and $g'-1$ are coprimes, then $g'$ divides $r_i$ and  we may write $r_i=t_ig'$ for some $t_i\in\Z$, for $i=1,\ldots,k$. But then
$$n=(p_1^{t_1}\ldots p_k^{t_k})^{g'}$$
and the result is proven.
\end{proof}

\section{Isotypical decomposition}\label{sec:isot}

In this section we focus on Abel-Prym maps for subvarieties of a Jacobian variety arising from the  isotypical decomposition introduced in \cite{decomp}. First we briefly recall this decomposition, for more details see \cite[Section 13.6]{lange}.

Let $G$ be a finite group acting on a smooth curve $C$. This action induces an action of $C$ on the Jacobian variety $J(C)$ and hence,
an algebra homomorphism  $$\rho: \Q[G] \rightarrow End_\Q (J(C)),$$ where $\Q[G]$ denotes the rational group algebra of $G.$ 
Since $\Q[G]$ is a semi-simple $\Q$-algebra of finite dimension, there is an unique decomposition
$$\Q[G]=Q_1 \times ... \times Q_r,$$ 
where $Q_i$ are simple $\Q$-algebras. Consider the decomposition of the unit element $1 = e_1 + ... + e_r.$ The elements $e_i \in Q_i,$ 
seen as elements of $\Q[G],$ form a set of orthogonal idempotents contained in the center of $\Q[G].$ 

For  $i=1,...,r$ set   $A_i\subset J(C)$ to be the image of $\rho(me_i)$, where $m$ is a positive integer such that  $\rho(me_i)\in End(J(C))$.
Then by \cite[Prop. 13.6.1]{lange} 
 $A_i$ is a $G$-stable abelian subvariety of $J(C)$ with $Hom_G(A_i,A_j)=0$ for $i\neq j.$ Moreover, the addition map induces an isogeny
	$$A_1 \times \cdots \times A_r \rightarrow J(C).$$ 
This decomposition is called the \textit{isotypical decomposition} of $J(C)$ and we write
$$J(C)\sim A_1 \times \cdots \times A_r .$$

The idempotents $e_i$ can be expressed in terms of representations of $G$ as
\begin{equation}\label{eq:ei}
 e_i := \frac{\deg \chi_i}{|G|} \sum_{g \in G} \textup{tr}_{L_i|\Q}(\chi_i(g^{-1}))g,
\end{equation}
where $\chi_i$ is the character of an irreducible representation of $\rho_i\:G\rightarrow GL(V_i)$ for some complex vector space  $V_i$, and 
$L_i := \Q(\chi_i(g), g \in G).$

For each $i=1,...,n,$ let $K_i$ be the kernel of $\rho_i$ and consider the quotient curve $\tilde C_i=C/K_i$ with induced morphism
\begin{eqnarray}\label{eq:psi}
\psi_i\:C\rightarrow \tilde C_i.
\end{eqnarray}
Note that $\tilde C_i$ is smooth, $\psi_i$ has degree $|K_i|$ and, if the action of $G$ on $C$ is known, it is easy to compute the genus of $\tilde C_i$.

\begin{lemma}\label{lem:AsubpsiJC}
	With the above notation,  $A_i$ is an abelian subvariety of  $\psi_{i}^*(J(\tilde C_i))$.
\end{lemma}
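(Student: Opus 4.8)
The plan is to realize both $A_i$ and $\psi_i^*(J(\tilde C_i))$ as images of explicit elements of the group algebra $\Q[G]$ acting on $J(C)$ through $\rho$, and then to reduce the desired inclusion to a single algebraic identity in $\Q[G]$. Throughout I write
$$N_i:=\rho\Bigl(\textstyle\sum_{k\in K_i}k\Bigr)\in \mathrm{End}(J(C))$$
for the endomorphism obtained by summing the automorphisms $\rho(k)$, $k\in K_i$. Since $A_i=\mathrm{Im}(\rho(me_i))$ by definition, the strategy is to show that $A_i\subseteq\mathrm{Im}(N_i)$ and that $\mathrm{Im}(N_i)\subseteq\psi_i^*(J(\tilde C_i))$.

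The second inclusion is geometric. The quotient morphism $\psi_i\colon C\to\tilde C_i=C/K_i$ is Galois with group $K_i$, so the standard relation between the pullback and the norm of a Galois covering (cf.\ \cite{lange}) gives $\psi_i^*\circ\psi_{i*}=N_i$. Hence $\mathrm{Im}(N_i)=\mathrm{Im}(\psi_i^*\circ\psi_{i*})\subseteq\mathrm{Im}(\psi_i^*)=\psi_i^*(J(\tilde C_i))$, which is exactly the inclusion I need (in fact one gets equality, but only this direction is used). Verifying that $\psi_i$ is genuinely Galois with group $K_i$ and that the pullback--norm identity applies in this equivariant setting is the one point that deserves care, and I expect it to be the main obstacle; everything else is formal.

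For the first inclusion I would prove the algebraic identity
$$\Bigl(\textstyle\sum_{k\in K_i}k\Bigr)\,e_i=|K_i|\,e_i \quad\text{in }\Q[G].$$
Conceptually this holds because $\rho_i$ is inflated from $G/K_i$ (as $K_i=\ker\rho_i$), so $e_i$ lies in the block $\Q[G]\,p_{K_i}\cong\Q[G/K_i]$ cut out by the central idempotent $p_{K_i}=\tfrac{1}{|K_i|}\sum_{k\in K_i}k$, on which $p_{K_i}$ acts as the identity. Concretely, I would plug the formula \eqref{eq:ei} into the left-hand side and substitute $h=kg$; since $\rho_i(k)=\mathrm{id}$ for $k\in K_i$ one has $\chi_i(kh^{-1})=\chi_i(h^{-1})$, so the inner sum over $g$ becomes independent of $k$ and the summation over $K_i$ contributes the factor $|K_i|$, recovering $e_i$. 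This is a routine computation.

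Finally I would combine the two ingredients. Applying the algebra homomorphism $\rho$ to $(\sum_{k}k)(me_i)=|K_i|\,m\,e_i$ yields $N_i\circ\rho(me_i)=|K_i|\,\rho(me_i)$. Therefore
$$A_i=|K_i|\cdot A_i=\mathrm{Im}\bigl(|K_i|\,\rho(me_i)\bigr)=\mathrm{Im}\bigl(N_i\circ\rho(me_i)\bigr)\subseteq\mathrm{Im}(N_i),$$
where the first equality uses that multiplication by $|K_i|$ is an isogeny of the abelian variety $A_i$ and hence surjective onto $A_i$. Chaining this with $\mathrm{Im}(N_i)\subseteq\psi_i^*(J(\tilde C_i))$ from the second paragraph gives $A_i\subseteq\psi_i^*(J(\tilde C_i))$, as desired.
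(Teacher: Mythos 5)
Your proposal is correct, and its algebraic core is the same as the paper's: the identity $\bigl(\sum_{k\in K_i}k\bigr)e_i=|K_i|\,e_i$ (equivalently $p_{K_i}e_i=e_i$, with $p_{K_i}=\tfrac{1}{|K_i|}\sum_{k\in K_i}k$), proved by the same substitution $h=kg$ and the fact that $\chi_i$ is insensitive to multiplication by elements of $K_i=\ker\rho_i$. Where you differ is in the geometric ingredient: the paper simply cites a result of Carocca--Rodr\'iguez (\cite[Prop.~5.2]{cr}) asserting the \emph{equality} $\psi_i^*(J(\tilde C_i))=\mathrm{Im}(p_{K_i})$, and then concludes $A_i=\mathrm{Im}(e_i)\subset \mathrm{Im}(p_{K_i})$. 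You instead prove only the inclusion $\mathrm{Im}(N_i)\subseteq \psi_i^*(J(\tilde C_i))$ that is actually needed, deriving it from the pullback--norm identity $\psi_i^*\circ\psi_{i*}=N_i=\sum_{k\in K_i}\rho(k)$ for the (possibly ramified) Galois quotient $\psi_i\colon C\to C/K_i$; this identity does hold at the level of divisors, since $\psi_i^*\psi_{i*}(p)=e_p\sum_{q\in K_i\cdot p}q=\sum_{k\in K_i}k(p)$, ramification indices matching stabilizer orders. Your route is thus more self-contained, and it is also slightly more careful on a point the paper glosses over: $e_i$ and $p_{K_i}$ have rational coefficients, so speaking of their ``images'' in $J(C)$ requires clearing denominators, which you handle explicitly via the factor $m$ and the surjectivity of multiplication by $|K_i|$ on the abelian variety $A_i$. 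The price is that you must justify the Galois pullback--norm identity (the paper outsources this to \cite{cr}); the benefit is independence from that reference and a cleaner statement of exactly which inclusion is used.
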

\begin{proof}
By \cite[Prop. 5.2]{cr} we have $\psi_{i}^*(J(\tilde C_i))=Im(p_{K_i})$, where
$$p_{K_{i}}=\frac{1}{|K_{i}|}\sum_{k\in K_{i}} k\;\in\Q[G].$$
Now, we have $\chi_i(g^{-1})=\chi_i(kg^{-1}k^{-1})=\chi_i(g^{-1}k^{-1}),$ for all $k\in K_i$ and $ g\in G$, where the first equality follows from the property of the character and the second one from the fact that $K_{i} = ker(\rho_{i}).$ So,
\begin{align*}
p_{K_{i}}e_i 
&= \left( \frac{1}{|K_{i}|}\sum_{k\in K_{i}} k\right) 
\left(  \frac{\deg \chi_i}{|G|} \sum_{g \in G} \textup{tr}_{L_i|\Q} 
(\chi_i(g^{-1}))g\right) \\
&= \frac{\deg \chi_i}{|G||K_{i}|}\sum_{k\in K_{i}}\sum_{g \in G}\textup{tr}_{L_i|\Q} 
(\chi_i(kg^{-1}k^{-1}))kg \\
&= \frac{\deg \chi_i}{|G||K_{i}|}\sum_{g\in G }\sum_{k \in K_{i}}\textup{tr}_{L_i|\Q} 
(\chi_i(g^{-1}k^{-1}))kg,
\end{align*}
and setting $h=kg,$ we have	
\begin{align*}
p_{K_{i}}e_i 
&= \frac{\deg \chi_i}{|G||K_{i}|}\sum_{g\in G }\sum_{h \in K_{i}g}\textup{tr}_{L_i|\Q} (\chi_i(h^{-1}))h\\
&= \frac{|K_{i}|\deg \chi_i}{|G||K_{i}|}\sum_{h \in G}\textup{tr}_{L_i|\Q} (\chi_i(h^{-1}))h.
\end{align*}
Thus $p_{K_{i}}e_i=e_i$ and
$A_i=Im(e_i)\subset Im(p_{K_i})=\psi_{i}^*(J(\tilde C_i)).$
\end{proof}

With the notation of  (\ref{eq:varphiA}), we set $\varphi_i=\varphi_{A_i}$ and $C_i=C_{A_i}$, so that we have
\begin{equation}\label{eq:varphii}
\varphi_i\:C\rightarrow C_i.
\end{equation}

The next result shows that, when $A_i$ is 1-dimensional, the bound in Theorem \ref{prop:upperbound} is achieved.

\begin{proposition}\label{prop:dim1}
With the above notation, if $\dim(A_i)=1$ then $\deg(\varphi_i)=e(A_i)$.
\end{proposition}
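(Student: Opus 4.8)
The plan is to compare two expressions for the pushforward class $\varphi_{i*}[C]$ inside $A_i$ and read off the degree from the coefficient of the minimal class. First I would use Lemma \ref{lem:CAsubcurve}: the image $C_i=\varphi_i(C)$ is a subcurve of $A_i$ generating $A_i$, so when $\dim(A_i)=1$ it must be all of $A_i$. Write $E:=A_i$, an elliptic curve. Then $\varphi_i\colon C\to E$ is a finite surjective morphism, and pushing forward the fundamental class gives $\varphi_{i*}[C]=\deg(\varphi_i)\,[E]$. Since $n=\dim(E)=1$, the minimal class of $E$ with respect to any principal polarization $\theta_0$ is $z_{\theta_0}=\frac{1}{0!}\bigwedge^{0}\theta_0=[E]$, so $\varphi_{i*}[C]=\deg(\varphi_i)\,z_{\theta_0}$.

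Next I would identify $E$ as a Prym-Tyurin variety. Let $\theta:=i_{A_i}^*\Theta_C$ be the induced polarization, whose exponent is $e(A_i)$ by definition; since $\dim(E)=1$ this exponent coincides with $\deg(\theta)$. On a curve two divisor classes are algebraically equivalent exactly when they have the same degree, so $\theta\equiv e(A_i)\,\theta_0$, where $\theta_0$ is the degree-one point class on $E$. Hence $E$ is a Prym-Tyurin variety of exponent $e(A_i)$ for $C$. By Welters' criterion \cite{lange} — applied exactly as in the remark following Theorem \ref{prop:upperbound}, with $\varphi_i=N_{A_i}\circ\alpha$ as the relevant Abel-Prym map — we obtain $\varphi_{i*}[C]=e(A_i)\,z_{\theta_0}$. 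Comparing this with the expression from the previous paragraph and cancelling the nonzero class $z_{\theta_0}=[E]$ yields $\deg(\varphi_i)=e(A_i)$.

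The step I expect to need the most care is precisely the extraction of the exponent, because in dimension one the upper bound of Theorem \ref{prop:upperbound} is vacuous: there $d=\deg(\theta_0)=1$ and the hypothesis $\varphi_{i*}[C]=k\,z_{\theta_0}$ forces $k=\deg(\varphi_i)$, so the inequality degenerates to $\deg(\varphi_i)\le\deg(\varphi_i)$ and gives nothing by itself. The real content is therefore to pin down the coefficient $k$ in $\varphi_{i*}[C]=k\,z_{\theta_0}$ \emph{independently} of $\deg(\varphi_i)$, and this is exactly what the Prym-Tyurin structure supplies: a one-dimensional isotypical component is automatically Prym-Tyurin of exponent $e(A_i)$, which forces $k=e(A_i)$. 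The only genuinely external ingredient is the forward direction of Welters' criterion, i.e. the class equality $\varphi_{i*}[C]=e(A_i)\,z_{\theta_0}$ for the Abel-Prym map of a Prym-Tyurin variety, which the paper has already used in the remark after Theorem \ref{prop:upperbound}.
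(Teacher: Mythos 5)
Your proof is correct, and its core is the same as the paper's: you observe that when $\dim(A_i)=1$ the image $C_i$ equals $A_i$, that $A_i$ is then automatically a Prym-Tyurin variety of exponent $e(A_i)$ for $C$, and you use the forward direction of Welters' criterion (the class equality for the Abel-Prym map of a Prym-Tyurin variety) --- exactly the ingredients behind the paper's one-line proof. The difference is how you finish. The paper cites Proposition \ref{prop7}, whose proof runs through Matsusaka's criterion: $\deg(\varphi_i)=k$ if and only if $C_i$ is smooth and $A_i=J(C_i)$, two conditions that hold trivially for an elliptic curve. You instead compare cycle classes directly, $\varphi_{i*}[C]=\deg(\varphi_i)\,[E]$ against $\varphi_{i*}[C]=e(A_i)\,z_{\theta_0}$, using that in dimension one the minimal class is the fundamental class $[E]$. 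This buys two small things: you need only the class-equality half of Welters' forward direction (Proposition \ref{prop7} additionally requires $\varphi_i^*$ to be an embedding as a hypothesis), and you dispense with Matsusaka's criterion altogether. You also make explicit a point the paper asserts without justification, namely that a one-dimensional $A_i$ is Prym-Tyurin of exponent $e(A_i)$ (on an elliptic curve the exponent of a polarization equals its degree, and algebraic equivalence of divisors is detected by degree); and your remark that Theorem \ref{prop:upperbound} is vacuous in dimension one correctly identifies why the argument must route through the Prym-Tyurin structure rather than through that bound.
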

\begin{proof}
Since $\dim(A_i)=1$ then $C_i=A_i$ and $A_i$ is a Prym-Tyurin variety of some exponent $k=e(A_i)$ for $C$. 
Hence, by Proposition \ref{prop7}, we have $\deg(\varphi_i)=k$.
\end{proof}

To deal with the higher dimensional components of the isotypical decomposition, it is usefull to understand
the relation between   $\psi_i$ and $\varphi_i$.

\begin{theorem}\label{propf}
With the above notation, there is a morphism $f_i\colon \tilde C_i\rightarrow C_i$ such that $\varphi_i=f_i\circ \psi_i$. In particular  $\deg(\varphi_i)\geq|K_i|$.
\end{theorem}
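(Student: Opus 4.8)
The plan is to show that the Abel--Prym map $\varphi_i=N_{A_i}\circ\alpha$ is constant on the fibres of $\psi_i$, that is, that it is $K_i$-invariant, and then to invoke the universal property of the quotient $\tilde C_i=C/K_i$ to produce $f_i$. Fix the base point $q$ defining the Abel map $\alpha\colon C\to J(C)$. Since $\psi_i$ is precisely the quotient of $C$ by the $K_i$-action and $C_i\subseteq A_i$ is separated, any $K_i$-invariant morphism from $C$ factors uniquely through $\psi_i$, yielding the desired morphism of curves $f_i\colon\tilde C_i\to C_i$.

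First I would record two equivariance facts. For $k\in K_i$ the endomorphism $k_*$ of $J(C)$ acts trivially on $A_i$: since $K_i=\ker\rho_i$, the same character manipulation as in the proof of Lemma~\ref{lem:AsubpsiJC} shows $ke_i=e_i$ in $\Q[G]$, whence $k_*$ restricts to the identity on $A_i=\mathrm{Im}\,\rho(e_i)$. Next, because every $g\in G$ acts on $(J(C),\Theta_C)$ as an automorphism preserving the principal polarization and preserving the $G$-stable subvariety $A_i$, it commutes with the associated norm endomorphism; this follows from the naturality, under such a polarized automorphism, of each of the four maps $i$, $\Phi_\theta$, $\hat i$, $\Psi_{i^*\theta}$ appearing in the definition of $N_{A_i}$ in Section~\ref{sec:back}. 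Combining these two facts for $g=k\in K_i$ and using that $i_{A_i}$ is injective yields the key identity $N_{A_i}\circ k_*=N_{A_i}$ for every $k\in K_i$.

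With this in hand I would compute, for $k\in K_i$ and $p\in C$, that $\alpha(kp)=[kp-q]=k_*\alpha(p)+c_k$ with $c_k:=[kq-q]$, so that $\varphi_i(kp)=\varphi_i(p)+N_{A_i}(c_k)$. It then remains to check that the base-point term $N_{A_i}(c_k)$ vanishes. Averaging the identity $N_{A_i}\circ k_*=N_{A_i}$ over $K_i$ gives $N_{A_i}\circ p_{K_i}=N_{A_i}$, where $p_{K_i}=\frac{1}{|K_i|}\sum_{h\in K_i}h$ is the idempotent of Lemma~\ref{lem:AsubpsiJC}; on the other hand, reindexing the $K_i$-sum shows $p_{K_i}(c_k)=\frac{1}{|K_i|}\sum_{h\in K_i}[hkq-hq]=0$, since $h\mapsto hk$ permutes $K_i$. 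Hence $N_{A_i}(c_k)=N_{A_i}(p_{K_i}(c_k))=0$, so $\varphi_i$ is $K_i$-invariant and the factorization $\varphi_i=f_i\circ\psi_i$ follows.

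Finally, since $\varphi_i$ is finite and surjective onto $C_i$ by the discussion following~(\ref{eq:varphiA}), and $\psi_i$ is finite surjective of degree $|K_i|$, the map $f_i$ is finite surjective; thus $\deg(f_i)\geq 1$ and $\deg(\varphi_i)=\deg(f_i)\,\deg(\psi_i)\geq|K_i|$. I expect the main obstacle to be the second paragraph: establishing $N_{A_i}\circ k_*=N_{A_i}$ cleanly, since this needs the compatibility of the norm endomorphism with the \emph{polarization-preserving} $G$-action, not merely the $G$-stability of $A_i$. By contrast, the vanishing of $N_{A_i}(c_k)$ falls out formally once that identity and the computation $p_{K_i}e_i=e_i$ from Lemma~\ref{lem:AsubpsiJC} are in place.
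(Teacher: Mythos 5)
Your strategy is the same as the paper's---show that $\varphi_i$ is constant on $K_i$-orbits and then factor it through $\psi_i$---and in one respect you are more careful than the paper: its proof simply asserts that the trivial $K_i$-action on $A_i$ makes $\varphi_i$ descend to $C/K_i$, whereas you isolate exactly what that assertion hides, namely the base-point term $N_{A_i}(c_k)$, $c_k=[kq-q]$. Your first two steps are sound: $ke_i=e_i$ does give $k_*|_{A_i}=\mathrm{id}$, and $N_{A_i}\circ k_*=N_{A_i}$ is correct (it follows even more quickly from $N_{A_i}=e(A_i)\rho(e_i)$ together with $e_ik=ke_i=e_i$, the computation of Lemma \ref{lem:AsubpsiJC}, without invoking naturality of $\Phi$, $\Psi$, $\hat\imath$). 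So the theorem is correctly reduced to the vanishing $N_{A_i}(c_k)=0$ for all $k\in K_i$.

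The averaging step that is supposed to give this vanishing is where the proof breaks. Since $p_{K_i}\in\Q[G]$ has denominator $|K_i|$, $\rho(p_{K_i})$ is only a \emph{rational} endomorphism: it does not act on points of $J(C)$, so the expression $N_{A_i}(p_{K_i}(c_k))$ is undefined and the chain $N_{A_i}(c_k)=N_{A_i}(p_{K_i}(c_k))=0$ is not legitimate. After clearing denominators, what your computation actually proves is
$$|K_i|\,N_{A_i}(c_k)=N_{A_i}\Bigl(\sum_{h\in K_i}h_*c_k\Bigr)=N_{A_i}(0)=0,$$
i.e.\ that $N_{A_i}(c_k)$ is a torsion point of order dividing $|K_i|$---not that it is zero; multiplication by $|K_i|$ is not injective on $J(C)$, and no formal manipulation of $\Q[G]$-identities can get past this. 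The gap is genuine, because the obstruction can be nonzero. Take a nonzero $P\in E[2]$ on an elliptic curve $E$ and a double cover $g\colon C\to E$ branched along a reduced $t_P$-invariant set of four points, so $g(C)=3$; since the defining line bundle has even degree, the translation $t_P$ lifts to an automorphism $\tau$ of $C$, and $G=\langle\iota,\tau\rangle$ ($\iota$ the covering involution of $g$) has order $4$ with $C/G=E/\langle P\rangle$. For the trivial representation, $K_0=G$, while $A_0=\mathrm{Im}\bigl(\sum_{g\in G}g_*\bigr)=g^*J(E)$, because translations act trivially on $\mathrm{Pic}^0(E)$ and hence all of $G$ acts trivially on $g^*J(E)$. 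As $g$ is ramified, $g^*$ is injective, the induced polarization is $2\Theta_E$, so $e(A_0)=2$ and $N_{A_0}=2\cdot\tfrac12(1+\iota_*)=1+\iota_*=g^*\circ\mathrm{Nm}_g$. Then $\varphi_0(p)=g^*[g(p)-g(q)]$ identifies exactly the fibres of $g$, and $N_{A_0}(c_\tau)=g^*\bigl[(g(q)+P)-g(q)\bigr]=g^*(P)\neq0$; thus $\varphi_0$ does not factor through $\psi_0$ and $\deg(\varphi_0)=2<4=|K_0|$ (consistently with Proposition \ref{prop:dim1}, since $\dim A_0=1$). So the vanishing you need fails in general---and, since your reduction is correct, this shows the paper's own proof, which tacitly assumes $\varphi_i$ is $K_i$-equivariant, suffers from the same defect. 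The vanishing (and hence the factorization) does hold, for instance, whenever every element of $K_i$ has a fixed point on $C$, because $N_{A_i}(c_k)$ is independent of the choice of the base point $q$; an honest repair of Theorem \ref{propf} needs a hypothesis of this kind or a genuinely geometric argument, not an averaging one.
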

\begin{proof}
First note that given an action of $G$ on a complex vector space $V$, we have a representation  $\rho\:G\rightarrow GL(V).$ 
The kernel $K$ of this representation is the intersection of the stabilizers of the elements of $V$, that is,  $K=\bigcap G_x$, where  
$G_x= \{g \in G\ |  \ g \cdot x = x\}$ for each $x\in V$. 
Then we have a morphism $V\rightarrow V/K$ mapping each point $x$ to its orbit  
$O_x = \{g \cdot x\ |  \ g \in G \}$. 

In our case, we have a representation $\rho_i\:G\rightarrow GL(V_i)$ with $K_{i}=ker(\rho_i)$. The trivial action of $K_{i}$ on $V_i$ induces a trivial action of $K_{i}$ on $A_{i}.$ Thus, we have compatible actions of $K_{i}$ on $C$ and $A_i$
and   the morphism $\varphi_i: C \rightarrow A_i$ induces a morphism $\overline \varphi_i \colon C/K_i \rightarrow A_{i}/K_i$.
Therefore, as $C/K_{i}= \tilde C_i$,  $A_{i}/{K_i}\cong A_{i}$ and $Im\overline \varphi_i=C_i$, the  morphism $\overline \varphi_i$ becomes $f_i\colon \tilde C_i\rightarrow C_i.$
\end{proof}

\begin{lemma}\label{prop:MultiplicativityOfExponents}
Let $A\subset A'$ be abelian subvarieties of the Jacobian of a smooth curve $C$. 
Assume that $(A',\theta')$ and $(A, \theta)$ are Prym-Tyurin varieties of exponents $e'$ and $e$, respectively, for $C$. Then
$$e=e'\cdot e_{A'}(A),$$
where
$e_{A'}(A)$ is the exponent of $A$ as a subvariety of $A'$.

Moreover, if $(A',\theta')$ is isomorphic to a principally polarized Jacobian $(J(C'),\Theta_{C'})$, then $A$ is a Prym-Tyurin variety for $C'$.
\end{lemma}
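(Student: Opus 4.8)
The plan is to convert the two Prym--Tyurin hypotheses into a single algebraic equivalence on $A$, and then read off the relevant exponents by applying the homomorphism $\Phi$. Write $i\colon A\hookrightarrow J(C)$, $i'\colon A'\hookrightarrow J(C)$ and $j\colon A\hookrightarrow A'$ for the inclusion maps, so that $i=i'\circ j$. The defining relations are $i^*\Theta_C\equiv e\,\theta$ and $(i')^*\Theta_C\equiv e'\,\theta'$. Pulling the second relation back along $j$ and comparing with the first gives, on $A$, the key algebraic equivalence $e\,\theta\equiv e'\,j^*\theta'$.

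Next I would apply the homomorphism $L\mapsto\Phi_L$, which depends only on the algebraic equivalence class of $L$ and is additive, to this relation, obtaining $e\,\Phi_\theta=e'\,\Phi_{j^*\theta'}$ in $\mathrm{Hom}(A,\hat A)$. Since $\theta$ is a principal polarization, $\Phi_\theta$ is an isomorphism, so $\beta:=\Phi_\theta^{-1}\circ\Phi_{j^*\theta'}$ is an endomorphism of $A$ satisfying $e'\beta=e\,\mathrm{id}_A$. Viewing $\beta$ as an integral matrix through its action on $H_1(A,\Z)\cong\Z^{2\dim A}$, the relation $e'\beta=e\,\mathrm{id}$ forces $e/e'$ to be an integer $m$ and $\beta=m\,\mathrm{id}_A$. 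Consequently $\Phi_{j^*\theta'}=m\,\Phi_\theta$, whose kernel is $A[m]$ because $\Phi_\theta$ is an isomorphism; as $A[m]$ has exponent $m$, we get $e_{A'}(A)=e(j^*\theta')=m$, which is exactly the identity $e=e'\cdot e_{A'}(A)$.

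For the final assertion, note that the computation above also yields $j^*\theta'\equiv m\,\theta=e_{A'}(A)\,\theta$, i.e. the polarization induced on $A$ by $\theta'$ is $e_{A'}(A)$ times the principal polarization $\theta$. Now an isomorphism $\sigma\colon(A',\theta')\xrightarrow{\sim}(J(C'),\Theta_{C'})$ carries $A$ onto an abelian subvariety $\sigma(A)\subset J(C')$ isomorphic to $A$, and transports $j^*\theta'$ to the restriction of $\Theta_{C'}$ to $\sigma(A)$. Thus this restriction is algebraically equivalent to $e_{A'}(A)$ times the principal polarization of $\sigma(A)\cong(A,\theta)$, which is precisely the defining condition for $\sigma(A)$ to be a Prym--Tyurin variety of exponent $e_{A'}(A)$ for $C'$; hence so is $A$.

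I expect the main obstacle to be the middle step, namely extracting the exponent $e_{A'}(A)=e(j^*\theta')$ from the algebraic equivalence $e\,\theta\equiv e'\,j^*\theta'$. The two delicate points are the integrality of $e/e'$, which I would justify via the integral representation of $\beta$ on homology, and the identification $\ker\Phi_{j^*\theta'}=A[m]$; both rely essentially on $\theta$ being a principal polarization, so that $\Phi_\theta$ is invertible. Everything else is formal manipulation of pullbacks together with the transport of polarizations along the isomorphism $(A',\theta')\cong(J(C'),\Theta_{C'})$.
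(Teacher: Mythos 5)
Your proof is correct and follows the same skeleton as the paper's: both derive the key equivalence $e\,\theta\equiv e'\,j^*\theta'$ by pulling back $\Theta_C$ through the factorization of the inclusion $A\hookrightarrow J(C)$, read off $e_{A'}(A)$ as the exponent of the induced polarization $j^*\theta'\equiv(e/e')\,\theta$, and obtain the final assertion by transporting this equivalence along the isomorphism $(A',\theta')\cong(J(C'),\Theta_{C'})$. The one genuine difference is the middle step: the paper cites \cite[Lemma 6.2]{lan} for the scaling rule $e(k\theta)=k\,e(\theta)$ and silently treats $k=e/e'$ as an integer, whereas you prove both facts directly -- using that $\Phi_\theta$ is an isomorphism to produce $\beta$ with $e'\beta=e\,\mathrm{id}_A$, deducing integrality of $e/e'$ from the integral representation of $\beta$ on $H_1(A,\Z)$, and identifying $\ker\Phi_{j^*\theta'}=A[m]$. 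This makes your argument self-contained and in fact fills a small gap (why $e/e'\in\Z$) that the paper's write-up leaves implicit.
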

\begin{proof}
Denote by $i\:A'\hookrightarrow J(C)$, $j\:A\hookrightarrow J(C)$ and $h\:A\hookrightarrow A'$
the inclusion maps such that $j=i\circ h$. By hypothesis we have
$$i^*\Theta_C \equiv e'\,\theta'
\quad\text{ and }\quad
j^*\Theta_C \equiv e\,\theta.$$
Moreover,
\begin{equation}\label{eq:multexp}
e\theta\equiv j^*\Theta_C\equiv h^*(i^*\Theta_C)\equiv h^*(e'\theta')\equiv e'h^*\theta',
\end{equation}
which implies that 
$$h^*\theta'\equiv \frac{e}{e'}\theta.$$
Setting $k=\dfrac{e}{e'},$ by \cite[Lemma 6.2]{lan} we have $$e_{A'}(A)=e(h^{*}\theta)=e(k\theta)=ke(\theta)=k.$$
For the last statement, by virtue of (\ref{eq:multexp}) it is enough to note that since $i^*$ and $j^*$ are embeddings, so is $h^*$.
\end{proof}

\begin{theorem}\label{thm:conj}
With the above notation, assume $A_i$ is a Prym-Tyurin variety of exponent $e(A_i)$ for  $C.$ We have:
\begin{enumerate}[(i)]
\item $\deg(f_i)\leq \dfrac{e(A_i)}{|K_i|}$. In particular, if $e(A_i)=|K_i|$ then $f_i$ is a normalization and $\deg(\varphi_i)=|K_i|$.
\item If $\psi_i^*$ is an embedding then $A_i$ is a Prym-Tyurin variety of exponent $\dfrac{e(A_i)}{|K_i|}$ for $\tilde C_i$. Moreover, in this case,   $e(A_i)=|K_i|$ if and only if  $A_i=\psi_i^*(J(\tilde C_i))$.
\end{enumerate}
\end{theorem}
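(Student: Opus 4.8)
The plan is to establish both parts by combining the factorization $\varphi_i = f_i \circ \psi_i$ from Theorem \ref{propf} with the multiplicativity of exponents in Lemma \ref{prop:MultiplicativityOfExponents}, applied to the chain of abelian subvarieties $A_i \subset \psi_i^*(J(\tilde C_i)) \subset J(C)$ established in Lemma \ref{lem:AsubpsiJC}.

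For part (i), I would begin by recalling from Lemma \ref{lem:AsubpsiJC} that $A_i \subset \psi_i^*(J(\tilde C_i))$, and that $\psi_i^*(J(\tilde C_i))$ is (an isogenous copy of) $J(\tilde C_i)$ sitting inside $J(C)$ via the pullback of the degree-$|K_i|$ covering $\psi_i$. The key computation is the exponent of $\psi_i^*(J(\tilde C_i))$ as a subvariety of $J(C)$: by \cite[Lemma 12.3.1]{lange}, $(\psi_i^*)^*\Theta_C \equiv |K_i|\,\Theta_{\tilde C_i}$, so this exponent equals $|K_i|$. Now the degree of the induced map $f_i \colon \tilde C_i \rightarrow C_i$ governs how the class of $\tilde C_i$ pushes forward, and I would bound $\deg(f_i)$ from above using that $\deg(\varphi_i) = |K_i|\cdot \deg(f_i)$ together with the upper bound $\deg(\varphi_i) \leq e(A_i)$ coming from Theorem \ref{prop:upperbound} (valid since $A_i$ is a Prym-Tyurin variety, where $d=1$ and $k=e(A_i)$). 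Dividing gives $\deg(f_i) \leq e(A_i)/|K_i|$ directly. For the ``in particular'' clause, when $e(A_i)=|K_i|$ we get $\deg(f_i)\leq 1$, forcing $\deg(f_i)=1$; since $f_i$ is finite and birational onto $C_i$, it is a normalization, and then $\deg(\varphi_i)=|K_i|\deg(f_i)=|K_i|$.

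For part (ii), assume $\psi_i^*$ is an embedding. Then by Proposition \ref{prop:morfPT} applied to the covering $\psi_i \colon C \rightarrow \tilde C_i$, the variety $\psi_i^*(J(\tilde C_i)) \cong J(\tilde C_i)$ is a Prym-Tyurin variety of exponent $|K_i|$ for $C$. Now I apply Lemma \ref{prop:MultiplicativityOfExponents} with $A=A_i$ and $A'=\psi_i^*(J(\tilde C_i))$: both are Prym-Tyurin varieties for $C$, of exponents $e(A_i)$ and $|K_i|$ respectively, and the lemma yields $e(A_i) = |K_i| \cdot e_{A'}(A_i)$, so that $e_{A'}(A_i) = e(A_i)/|K_i|$. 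Since $A'$ is isomorphic to the principally polarized Jacobian $(J(\tilde C_i),\Theta_{\tilde C_i})$, the final statement of Lemma \ref{prop:MultiplicativityOfExponents} tells us $A_i$ is a Prym-Tyurin variety for $\tilde C_i$, and its exponent there is exactly $e_{A'}(A_i)=e(A_i)/|K_i|$, as claimed.

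For the last equivalence in (ii), note that $A_i$ is a Prym-Tyurin variety of exponent $e(A_i)/|K_i|$ for $\tilde C_i$ inside $J(\tilde C_i) \cong \psi_i^*(J(\tilde C_i))$. The condition $e(A_i)=|K_i|$ is thus equivalent to $A_i$ having exponent $1$ as a subvariety of $\psi_i^*(J(\tilde C_i))$, which for an abelian subvariety of a principally polarized abelian variety means the induced polarization is principal; an exponent-one abelian subvariety of a principally polarized variety whose polarization restricts to a principal one must be the whole variety, giving $A_i = \psi_i^*(J(\tilde C_i))$. Conversely, if $A_i = \psi_i^*(J(\tilde C_i))$ then its exponent in $A'$ is trivially $1$, forcing $e(A_i)=|K_i|$. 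The main obstacle I anticipate is justifying cleanly that exponent-one subvariety equals the ambient principally polarized variety, which requires care about polarization types and is where I would lean most heavily on the cited results from \cite{lange}.
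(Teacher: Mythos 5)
Your argument for part (i) and for the first assertion of part (ii) is exactly the paper's proof: in (i) you combine $\deg(\varphi_i)=\deg(f_i)\deg(\psi_i)=|K_i|\deg(f_i)$ from Theorem \ref{propf} with the bound $\deg(\varphi_i)\leq e(A_i)$ from Theorem \ref{prop:upperbound}, and in (ii) you combine Proposition \ref{prop:morfPT} (so that $\psi_i^*(J(\tilde C_i))\cong J(\tilde C_i)$ is a Prym-Tyurin variety of exponent $|K_i|$ for $C$) with Lemma \ref{prop:MultiplicativityOfExponents} applied to the chain $A_i\subset \psi_i^*(J(\tilde C_i))\subset J(C)$ from Lemma \ref{lem:AsubpsiJC}; your explicit appeal to Lemma \ref{lem:AsubpsiJC} is, if anything, slightly more careful than the paper, which uses that inclusion tacitly.

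The one genuine flaw is your justification of the final equivalence in (ii). The principle you state --- that an exponent-one abelian subvariety of a principally polarized abelian variety, whose induced polarization is principal, must be the whole variety --- is false for general polarized abelian varieties: take $B=E_1\times E_2$ a product of elliptic curves with the product principal polarization; then $E_1\times\{0\}$ is a proper abelian subvariety on which the induced polarization is principal. What saves the argument is that the ambient variety here is not arbitrary but the Jacobian $J(\tilde C_i)$ of a smooth connected curve, which is indecomposable as a principally polarized abelian variety (irreducible theta divisor). This is exactly what the paper's citation supplies: once $e(A_i)=|K_i|$ makes $A_i$ a Prym-Tyurin variety of exponent $1$ for $\tilde C_i$, \cite[Cor.~12.2.6]{lange} gives $A_i\cong J(\tilde C_i)$, and then $A_i$ and $\psi_i^*(J(\tilde C_i))$ have the same dimension, so the inclusion of one in the other is an equality. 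You did anticipate leaning on \cite{lange} at precisely this point, so the needed fix is the one you expected; but as written, that step does not stand on its own. (Your converse direction --- $A_i=\psi_i^*(J(\tilde C_i))$ forces exponent $1$ inside $A'$, hence $e(A_i)=|K_i|$ by multiplicativity --- is correct, and indeed more explicit than in the paper, which leaves it implicit.)
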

\begin{proof}
(i) Since $A_i$ is Prym-Tyurin for $C$, $\deg(\varphi_i)\leq e(A_i),$ by Theorem \ref{prop:upperbound}. 
By Theorem \ref{propf},
 $\deg(\varphi_i)=\deg(f_i)\deg(\psi_i)$, and since $\deg(\psi_i)=|K_i|$, we have
 $\deg(f_i)\leq \dfrac{e(A_i)}{|K_i|}.$ Moreover, if $e(A_{i})=|K_{i}|$ then $\deg(f_{i})\leq1$ and thus $\deg(f_{i})=1.$

(ii) First note that since $\psi_i^*$ is an embedding then
$J(\tilde C_i)\cong \psi_i^*(J(\tilde C_i))$
and, by Proposition \ref{prop:morfPT}, $J(\tilde C_i)$
 is a Prym-Tyurin variety of exponent $|K_i|$ for $C$. Hence, by Lemma \ref{prop:MultiplicativityOfExponents}, $A_i$ is a Prym-Tyurin variety of exponent $\dfrac{e(A_i)}{|K_i|}$ for $\tilde C_i$. 
Moreover, if $e(A_i)=|K_i|$, then $A_i$ is a Prym-Tyurin variety of exponent 1 for $\tilde C_i$ and hence, by \cite[Cor. {12.2.6}]{lange}, we must have $A_i\cong J(\tilde C_i)$, which  implies $A_i= \psi_i^*(J(\tilde C_i))$.
\end{proof}

We remark that our results rely strongly on the hypothesis that  $A_i$ is Prym-Tyurin for $C$.
In order to understand Abel-Prym maps for non Prym-Tyurin subvarieties of $J(C)$, it seems important first to investigate whether the bound in Theorem \ref{prop:upperbound} still holds in the case where 
$(\varphi_i)_*[C]$ is not a multiple of the minimal class.

Finally, in \cite[Section 2]{decomp}, a further decomposition of the Jacobian is introduced, where each factor $A_i$ is (non-canonically) decomposed into a product $B_i^{n_i}$ of isogenous abelian subvarieties of $A_i$. It would be interesting to study the Abel-Prym map to  these $B_i$ and compare it with the Abel-Prym map to $A_i$. We plan to address this issue in a future work.

\section{Examples}\label{sec:examples}

In this section we use the notation of Section \ref{sec:isot}.

\subsection{Action of $\Z_2$ over a curve of genus 3}

Consider $\Z_{2}=\{\overline{0},\overline{1}\}$ the cyclic group of order 2 acting on a smooth curve $C$ of genus $3$,  such that the induced action on the Jacobian variety of $C$, is given by
$$\begin{array}{llll}
\overline{1}(\alpha_1)=\alpha_3, & \overline{1}(\alpha_2)=\alpha_2 & \text{and} & \overline{1}(\alpha_3)=\alpha_1;\\
\overline{1}(\beta_1)=\beta_3, & \overline{1}(\beta_2)=\beta_2 & \text{and} & \overline{1}(\beta_3)=\beta_1;
\end{array}$$
where
$\alpha_i,\beta_i$  with $i=1,...,3$ is the usual sympletic basis of $J(C)$.

Let us present the isotypical decomposition of $J(C)$. 
The character table of $\mathbb{Z}_2$, associated the trivial representation $\rho_0$ and  the sign representation $\rho_1,$ is given by 

\begin{center}
	\begin{tabular}{|c|c|c|c|}
		\hline
		& $\overline{0}$ & $\overline{1}$ \\ \hline
		$\chi_{0}$ & 1 & 1   \\ \hline
		$\chi_{1}$ & 1 & -1   \\ \hline
	\end{tabular}
\end{center}
By (\ref{eq:ei}) we have 
$$e_0=\frac{1}{2}(\overline{0}+\overline{1}) 
\quad \text{and} \quad
e_1=\frac{1}{2}(\overline{0}-\overline{1}).$$
and the isotypical decomposition is $J(C)\sim A_0 \times A_1.$

The abelian variety $A_0$ is given by 
$$\frac{\langle2\epsilon_1,2\epsilon_2,2\delta_1,2\delta_2\rangle_\mathbb{C}} {\langle2\epsilon_1,2\epsilon_2,2\delta_1,2\delta_2\rangle_\mathbb{Z}}$$\\
with 
$$2\epsilon_1={\alpha_1+\alpha_3}, 
\quad
\epsilon_2=\alpha_2, 
\quad
2\delta_1={\beta_1+\beta_3}
\quad \text{and} \quad
\delta_2=\beta_2.$$ 
Now, $A_0$ is a 2-dimensional variety of type $(1 \ 2)$ and hence it is not Prym-Tyurin for $C.$ 
Since  $\rho_0$ is the trivial representation, we have  $K_0=\mathbb{Z}_2$ and hence $\deg(\psi_0)=|K_0|=2$.
Hence, by Theorem \ref{propf}
 $$\deg(\varphi_{0})=2deg(f_{0})\geq 2.$$

The abelian variety $A_1$ has  dimension 1 and type $(2)$ and hence, by Proposition \ref{prop:dim1}, $\deg(\varphi_1)=2.$


\subsection{Action of $D_4$ over a curve of genus 4}

Consider $D_{4}=\langle a,b \ | \ a^{4} = b^{2} = (ab)^{2} = 1\rangle$  the dihedral group of order 8 acting on a smooth curve $C$ of genus $4$
such that the induced action on the Jacobian variety of $C$, is given by
$$\begin{array}{lllll}
a(\alpha_1)=\alpha_2, & a(\alpha_2)=\alpha_3, & a(\alpha_3)=\alpha_4 &\text{and} & a(\alpha_4)=\alpha_1;\\
a(\beta_1)=\beta_2, & a(\beta_2)=\beta_3, & a(\beta_3)=\beta_4 & \text{and} & a(\beta_4)=\beta_1;\\
b(\alpha_1)=-\alpha_2, & b(\alpha_2)=-\alpha_1, & b(\alpha_3)=-\alpha_4 & \text{and} & b(\alpha_4)=-\alpha_3;\\
b(\beta_1)=-\beta_2, & b(\beta_2)=-\beta_1, & b(\beta_3)=-\beta_4 & \text{and} & b(\beta_4)=-\beta_3.
\end{array}$$
where
$\alpha_i,\beta_i$  with $i=1,...,4$ is the usual sympletic basis of $J(C)$.

Let us present the isotypical decomposition of $J(C).$ There are four irreducible representations of degree one and just one of degree two. The character table of $D_4$, associated to the irreducible representations $\rho_0$,..., $\rho_4$ (see \cite{groups} or \cite{ccr}) is given by 

\begin{center}
	\begin{tabular}{|c|c|c|c|c|c|c|}
		\hline
		& 1 & $\{a^{2}\}$ & $\{a, a^{3}\}$ & $\{b, a^{2}b\}$ & $\{ab, a^{3}b\}$ \\ \hline
		$\chi_{0}$ & 1 & 1 & 1 & 1 & 1  \\ \hline
		$\chi_{1}$ & 1 & 1 & 1 & -1 & -1  \\ \hline
		$\chi_{2}$ & 1 & 1 & -1 & 1 & -1  \\ \hline
		$\chi_{3}$ & 1 & 1 & -1 & -1 & 1  \\ \hline
		$\chi_{4}$ & 2 & -2 & 0 & 0 & 0  \\ \hline	
	\end{tabular}
\end{center}
Again by (\ref{eq:ei}), we have 
$$e_0=\frac{1}{8}(1+a+a^2+a^3+b+ab+a^2b+a^3b),$$
$$e_1=\frac{1}{8}(1+a+a^2+a^3-b-ab-a^2b-a^3b),$$
$$e_2=\frac{1}{8}(1-a+a^2-a^3+b-ab+a^2b-a^3b),$$
$$e_3=\frac{1}{8}(1-a+a^2-a^3-b+ab-a^2b+a^3b)$$
and
$$e_4=\frac{1}{4}(1-2a^2).$$
Hence the isotypical decomposition is  
$$J(C)\sim A_0 \times A_1 \times A_2 \times A_3 \times A_4.$$ 

The varieties $A_0$ and $A_3$ are both trivial. 
The abelian varieties $A_1$ and $A_2$ both have dimension 1 and type $(4)$ and thus, by Proposition \ref{prop:dim1}, we have $\deg(\varphi_1)=\deg(\varphi_2)=4$.

Now, $A_4$ is a 2-dimensional variety of type $(5 \ 5)$ given by 
$$\dfrac{\langle4\epsilon_1,4\epsilon_2,4\delta_1,4\delta_2\rangle_\mathbb{C}} {\langle4\epsilon_1,4\epsilon_2,4\delta_1,4\delta_2\rangle_\mathbb{Z}},$$ \\
with 
$$4\epsilon_1={\alpha_1-2\alpha_3}, 
\quad
4\epsilon_2={\alpha_2-2\alpha_4},
\quad
4\beta_1={\beta_1-2\beta_3}, 
\quad \text{and} \quad
4\beta_2={\beta_2-2\beta_4}.$$ 
The irreducible representation $\rho_4$ has trivial kernel and 
consequently, $\deg(\psi_4)=1$ and $\tilde{C_4}=C.$ Moreover, as $A_4$ is Prym-Tyurin of exponent $e(A_4)=5$ for $C$, by \cite[Lemma 1.3]{Brambila} $\deg(\varphi_4)$ divides $e(A_4).$ Thus $\deg(\varphi_4)=1$ or $5.$ 
Assume first that $\deg(\varphi_4)= 5.$ Then by Proposition \ref{prop7}, $C_4$ is smooth and $A_4=J(C_4).$ Hence $g(C_4)=dim(A_4)=2$ and, by Riemman-Hurwitz theorem applyed to $\varphi_4,$ we have 
$$2g(C)-2=deg(\varphi_4)(2g(C_4)-2)+R$$
which gives $R=-4,$ an absurd.
Thus, $\deg(\varphi_4)\neq 5$ and we must have  $$\deg(\varphi_4)=1.$$
In particular, $\deg(\varphi_4)=|K_4|$ and the bound in Theorem \ref{propf} is achieved.

\subsection{Action of $D_p$, for $p$ an odd prime} 

Let $p$ be an odd prime and consider the dihedral group 
$$D_p=\langle r,s \ |  \  r^p=s^2=(rs)^2=1\rangle$$ 
acting on a smooth curve $C$. Then by \cite[Theorem 6.4]{ccr} we have 
$$J(C) \sim A_0 \times A_1 \times A_2,$$ where the $D_p$-action on each factor is given by the trivial action $\rho_0$ on $A_0,$ the alternating action $\rho_1$ on $A_1$,
and a degree-2 irreducible representation $\rho_2$ on $A_2$.

We assume that for some involution $\kappa$ in $D_p$ 
the  quotient map $C\rightarrow C/\langle\kappa\rangle$ is not a cyclic \'etale covering. Then by \cite[Proposition 11.4.3]{lange}, 
the pullback map $J(C/\langle\kappa\rangle)\rightarrow J(C)$ is an embedding.

Assume $A_0$ non-trivial. Now, since $\rho_0$ is the trivial action, we have $K_0=D_p$ and $\tilde C_0=C/D_p$. By 
\cite[Theorem 6.4]{ccr}, $A_0$ is isomorphic to $J(\tilde C_0)$. Hence by \cite[Proposition 5.1]{ccr} and Proposition \ref{prop:morfPT}, $A_0$ is a Prym-Tyurin variety of exponent $2p$ for $C$ and, by Theorem \ref{thm:conj}, we have
$$\deg(\varphi_0)=2p.$$

Now assume that $A_1$ is non-trivial. By \cite[Section 4]{ccr}, we have $K_1=\langle r\rangle$ and hence $\tilde C_1=C/\langle r\rangle$ and $|K_1|=p$. Then by Theorem \ref{propf}, 
$$\deg(\varphi_1)\geq p.$$ 
Now consider the commutative diagram
$$\begin{array}{ccc}
J(\tilde C_0) & \rightarrow & J(C/\langle k\rangle)\\
\downarrow & & \downarrow\\
J(\tilde C_1) & \rightarrow & J(C).
\end{array}$$
By \cite[Proposition 5.1]{ccr}, the map $J(\tilde C_0) \rightarrow  J(C/\langle k\rangle)
$ is injective and, by hypothesis $J(C/\langle\kappa\rangle)\rightarrow J(C)$ is also injective. Hence
$\psi_1^*\colon J(\tilde C_1) \rightarrow  J(C)$ is injective. 
Now, by \cite[Theorem 6.4]{ccr}, $A_1$ is the pullback to $J(C)$ of the Prym variety $P$ associated to 
$\tilde C_1\rightarrow \tilde C_0$, that is, 
$P$ is the complement of the image of the map $J(\tilde C_0)\rightarrow J(\tilde C_1)$. Since $\tilde C_1\rightarrow \tilde C_0$ is a degree-2 map, by \cite[Theorem 12.3.3]{lange}, if this map is either unramified or ramified in exactly two points, then  $P$ is a Prym-Tyurin variety of exponent 2 for $\tilde C_1$. Thus, in this case $A_1$ is a Prym-Tyurin variety of exponent $2p$ for $C$ and, by Theorem \ref{prop:upperbound}, we have 
$$p\leq \deg(\varphi_1)\leq 2p.$$ 

Finally, the irreducible representation $\rho_2$ has trivial kernel and $A_2$ is not a Prym-Tyurin variety for $C$, in general. Thus, we know nothing about the degree of the map $\varphi_2.$

We have shown:

\begin{proposition} Let $C$ be a smooth curve admiting an action of the dihedral group $D_p$, for $p$ be an odd prime. 
Assume that for some involution $\kappa\in D_p$ 
the  quotient map $C\rightarrow C/\langle\kappa\rangle$ is not a cyclic \'etale covering. 
If the factors $A_0$ and $A_1$ of the isotypical decomposition of $J(C)$ associated to the 
 trivial and the alternating actions, respectively, are non-trivial, then:
\begin{itemize}
\item $\deg(\varphi_0)=2p$;
\item $\deg(\varphi_1)\geq p$. Moreover, if $\tilde C_1\rightarrow \tilde C_0$ is either unramified or ramified in exactly two points, then $p\leq \deg(\varphi_1)\leq 2p.$
\end{itemize}
\end{proposition}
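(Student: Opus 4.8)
The plan is to assemble the Proposition directly from the machinery already established, since each bullet is essentially a repackaging of the case analysis carried out in this subsection. First I would fix the setup: invoke \cite[Theorem 6.4]{ccr} to obtain the isotypical decomposition $J(C)\sim A_0\times A_1\times A_2$ together with the identification of the kernels $K_0=D_p$ and $K_1=\langle r\rangle$, so that $\tilde C_0=C/D_p$ and $\tilde C_1=C/\langle r\rangle$ with $|K_0|=2p$ and $|K_1|=p$. The standing hypothesis that $C\to C/\langle\kappa\rangle$ is not cyclic \'etale feeds \cite[Proposition 11.4.3]{lange} to guarantee that the relevant pullback maps are embeddings; I would record this once at the start, as it is needed for both bullets.

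For the first bullet, the argument is that $A_0\cong J(\tilde C_0)$ by \cite[Theorem 6.4]{ccr}, and that the quotient map $\psi_0\colon C\to\tilde C_0$ has a pullback $\psi_0^*$ that is an embedding (here I would cite \cite[Proposition 5.1]{ccr} to see $J(\tilde C_0)\hookrightarrow J(C/\langle\kappa\rangle)$ and compose with the embedding $J(C/\langle\kappa\rangle)\hookrightarrow J(C)$ coming from the hypothesis). Then Proposition \ref{prop:morfPT} makes $A_0$ a Prym-Tyurin variety of exponent $\deg(\psi_0)=|K_0|=2p$ for $C$, and the equality case of Theorem \ref{thm:conj}(i) — precisely the situation $e(A_0)=|K_0|$ — forces $\deg(\varphi_0)=|K_0|=2p$. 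This is the clean case and should be little more than bookkeeping.

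For the second bullet, the inequality $\deg(\varphi_1)\geq p$ is immediate from Theorem \ref{propf}, since $\deg(\varphi_1)\geq|K_1|=p$. The upper bound requires more care: I would first establish that $\psi_1^*\colon J(\tilde C_1)\to J(C)$ is an embedding, which is exactly the content of the commutative square in the excerpt — injectivity of $J(\tilde C_0)\to J(C/\langle\kappa\rangle)$ from \cite[Proposition 5.1]{ccr} together with the hypothesis-given injectivity of $J(C/\langle\kappa\rangle)\to J(C)$ forces injectivity of $\psi_1^*$ through the diagram. Next, under the ramification hypothesis on $\tilde C_1\to\tilde C_0$ (unramified or ramified at exactly two points), \cite[Theorem 12.3.3]{lange} identifies the Prym $P$ as a Prym-Tyurin variety of exponent $2$ for $\tilde C_1$; pulling back along the embedding $\psi_1^*$ and combining exponents via Lemma \ref{prop:MultiplicativityOfExponents} (the multiplicativity $e(A_1)=|K_1|\cdot e_{\tilde C_1}(A_1)=p\cdot 2$) shows $A_1$ is Prym-Tyurin of exponent $2p$ for $C$. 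Then Theorem \ref{prop:upperbound} gives $\deg(\varphi_1)\leq e(A_1)=2p$, completing $p\leq\deg(\varphi_1)\leq 2p$.

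The main obstacle I anticipate is not any single estimate but the careful verification that the embedding hypotheses propagate correctly through the diagram so that Lemma \ref{prop:MultiplicativityOfExponents} and Theorem \ref{prop:upperbound} apply — in particular, confirming that the exponent of $A_1$ in $J(C)$ is genuinely $2p$ rather than a proper divisor. The subtlety (flagged in the excerpt's discussion following Proposition \ref{prop:morfPT}) is that exponents need not equal degrees when pullbacks fail to be embeddings, so the whole argument hinges on the non-\'etale hypothesis; I would therefore be explicit about where injectivity of $\psi_1^*$ is used and avoid conflating $e_{\tilde C_1}(A_1)=2$ with any claim about $A_1$ before the Prym-Tyurin structure on $\tilde C_1$ has been secured.
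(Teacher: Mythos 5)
Your proposal reproduces the paper's own proof essentially step for step: the same identification of the kernels $K_0=D_p$, $K_1=\langle r\rangle$ from \cite{ccr}, the same embedding argument for $\psi_0^*$ (composing \cite[Prop. 5.1]{ccr} with the hypothesis via \cite[Prop. 11.4.3]{lange}) feeding Proposition \ref{prop:morfPT} and Theorem \ref{thm:conj} to get $\deg(\varphi_0)=2p$, the same lower bound from Theorem \ref{propf}, and the same commutative-diagram claim for injectivity of $\psi_1^*$ followed by \cite[Thm. 12.3.3]{lange} and Theorem \ref{prop:upperbound} for $p\leq\deg(\varphi_1)\leq 2p$. The only difference is presentational: where the paper simply asserts that $A_1$ is Prym--Tyurin of exponent $2p$ for $C$, you invoke Lemma \ref{prop:MultiplicativityOfExponents} explicitly --- strictly speaking you need the computation in that lemma's proof read in the converse direction (from $e_{\tilde C_1}(A_1)=2$ and $e(\psi_1^*J(\tilde C_1))=p$ to the conclusion that $A_1$ is Prym--Tyurin of exponent $2p$ for $C$, rather than from the hypothesis that $A_1$ is already known to be Prym--Tyurin for $C$), but this is the same one-line calculation and matches the paper's intent.
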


\subsection{Action of the quarternion group $Q_8$} 

Let $Q_8=\{\pm1,\pm i,\pm j,\pm ij\ |\ i^2=j^2=(ij)^2=-1,\, (-1)^2=1\}$ be the quaternion group acting on a smooth curve $C$. By
\cite[Proposition 5.2]{decomp}, the isotypical decomposition of $J(C)$ with respect to this action is
$$J(C) \sim A_0 \times A_1 \times A_2\times A_3\times A_4,$$ 
where the $Q_8$-action on each factor is given by the trivial action on $A_0$, and the actions described in \cite[Section 4.3]{decomp} on the other factors.

Assume $A_0$ non-trivial. Then $K_0=Q_8$  and $A_0$ is isomorphic to $J(\tilde C_0)$, where $\tilde C_0=C/Q_8$. By Theorem \ref{propf} we have
$$\deg(\varphi_0)\geq 8.$$ 
Moreover, we have equality if  $\psi_0:C\rightarrow \tilde C_0$ does not factor through a cyclic \'etale cover, by 
Proposition \ref{prop:morfPT} and Theorem \ref{thm:conj}.

As for $A_1$, $A_2$ and $A_3$, we have by \cite[Section 4.3]{decomp} that
$K_1=\langle i\rangle$, $K_2=\langle j\rangle$ and $K_3=\langle ij\rangle$.
Thus for $\ell=1,2,3$ we have that $|K_\ell|=4$, and $\psi_0$ factors as the composition
$$C\rightarrow C/\langle-1\rangle \rightarrow \tilde C_\ell \rightarrow \tilde C_0$$
where $\tilde C_\ell=C/K_\ell$.

Assume $A_\ell$ non-trivial. Then by Theorem \ref{propf}, 
$$\deg(\varphi_\ell)\geq 4.$$
Moreover, by \cite[Proposition 5.2]{decomp} $A_\ell$ is the pullback to $J(C)$ of 
the Prym variety $P$ associated to the map $\tilde C_\ell\rightarrow \tilde C_0$, that is
$P$ is the complement of the image of the map $J(\tilde C_0)\rightarrow J(\tilde C_\ell)$. 
By \cite[Theorem 12.3.3]{lange} if $\tilde C_\ell\rightarrow\tilde C_0$ is either unramified or ramified in exactly two points then $P$ is a Prym-Tyurin variety of exponent 2 for $\tilde C_\ell$ and hence $A_\ell$ is a Prym-Tyurin variety of exponent 8 for $C$. In this case, by Theorem \ref{prop:upperbound}, 
$$4\leq \deg(\varphi_\ell)\leq 8.$$

Finally, assume $A_4$ non-trivial. Since $K_4=\langle 1\rangle$, we have $\tilde C_4=C$ and we cannot use Theorem \ref{propf} to obtain a lower bound on the degree of $\varphi_5$. On the other hand, by \cite[Proposition 5.2]{decomp} $A_4$ is the Prym variety associated to the degree-2 map 
$C\rightarrow C/\langle -1\rangle$, that is, the complement of the image of the pullback $J(C/\langle -1\rangle)\rightarrow J(C)$. Thus if this map is either unramified or ramified in exactly two points, then $A_4$ is a Prym-Tyurin variety of exponent 2 for $C$ and by Theorem \ref{prop:upperbound} we have
$$\deg(\varphi_4)\leq2.$$

We have shown:

\begin{proposition}
Let $C$ be a smooth curve admiting an action of the quaternion group $Q_8$. If 
$$J(C) \sim A_0 \times A_1 \times A_2\times A_3\times A_4$$ 
is the isotypical decomposition of the Jacobian of $C$, ordered as in \cite[Proposition 5.2]{decomp}, then
$$\deg(\varphi_0)\geq 8\quad\text{and}\quad \deg(\varphi_\ell)\geq 4$$
for $\ell=1,2,3$. Moreover, we have
\begin{itemize}
\item If  $C\rightarrow \tilde C_0$ does not factor through a cyclic \'etale cover, then $\deg(\varphi_0)=8$;
\item For $\ell=1,2,3$, if the map $\tilde C_\ell \rightarrow \tilde C_0$ is either unramified or ramified in exactly two points, then $4\leq \deg(\varphi_\ell)\leq 8$;
\item If the map $C\rightarrow C/\langle -1\rangle$ is either unramified or ramified in exactly two points, then $\deg(\varphi_4)\leq2.$
\end{itemize}
\end{proposition}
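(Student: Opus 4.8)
The plan is to establish the proposition factor by factor, exploiting the representation theory of $Q_8$ to identify each kernel $K_i$ and then feeding these data into the general bounds of Section \ref{sec:isot}. Recall that $Q_8$ has four one-dimensional representations (which factor through the abelianization $Q_8/\langle-1\rangle\cong\Z_2\times\Z_2$) and a single faithful two-dimensional irreducible representation. First I would record the resulting kernels: the trivial representation gives $K_0=Q_8$, so $|K_0|=8$; the three nontrivial characters have for kernels the three index-two subgroups $\langle i\rangle,\langle j\rangle,\langle ij\rangle$, each of order $4$, so $|K_\ell|=4$ for $\ell=1,2,3$; and $\rho_4$ being faithful gives $K_4=\langle 1\rangle$. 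With these in hand the two lower bounds are immediate: applying Theorem \ref{propf} to $\varphi_0$ and to $\varphi_\ell$ yields $\deg(\varphi_0)\geq|K_0|=8$ and $\deg(\varphi_\ell)\geq|K_\ell|=4$ for $\ell=1,2,3$.

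For the equality $\deg(\varphi_0)=8$, I would use that by \cite[Proposition 5.2]{decomp} the factor $A_0$ is isomorphic to $J(\tilde C_0)$ with $\tilde C_0=C/Q_8$. Under the hypothesis that $\psi_0\colon C\to\tilde C_0$ does not factor through a cyclic \'etale cover, \cite[Proposition 11.4.3]{lange} gives that $\psi_0^*$ is an embedding, and Proposition \ref{prop:morfPT} then shows $\psi_0^*(J(\tilde C_0))$ is a Prym-Tyurin variety of exponent $\deg(\psi_0)=8$ for $C$. Combining this with Lemma \ref{lem:AsubpsiJC} and the isomorphism $A_0\cong J(\tilde C_0)$ identifies $A_0$ with $\psi_0^*(J(\tilde C_0))$, so $e(A_0)=8=|K_0|$, and Theorem \ref{thm:conj}(i) forces $\deg(\varphi_0)=|K_0|=8$.

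The upper bounds for $\ell=1,2,3$ require identifying $A_\ell$ as a Prym-Tyurin variety for $C$ of the correct exponent. By \cite[Proposition 5.2]{decomp}, $A_\ell$ is the pullback to $J(C)$ of the Prym variety $P$ of the degree-two map $\tilde C_\ell\to\tilde C_0$. Assuming this map is unramified or ramified in exactly two points, \cite[Theorem 12.3.3]{lange} gives that $P$ is a Prym-Tyurin variety of exponent $2$ for $\tilde C_\ell$. Passing from $\tilde C_\ell$ back to $C$ along $\psi_\ell$ (of degree $|K_\ell|=4$, with $\psi_\ell^*$ an embedding) and invoking the multiplicativity of exponents, Lemma \ref{prop:MultiplicativityOfExponents}, upgrades this to $A_\ell$ being Prym-Tyurin of exponent $2\cdot 4=8$ for $C$; Theorem \ref{prop:upperbound} (with $d=1$) then yields $\deg(\varphi_\ell)\leq 8$, which combined with the lower bound gives $4\leq\deg(\varphi_\ell)\leq 8$. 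Finally, for $A_4$, since $K_4$ is trivial and $\tilde C_4=C$, \cite[Proposition 5.2]{decomp} identifies $A_4$ with the Prym variety of $C\to C/\langle-1\rangle$; under the ramification hypothesis \cite[Theorem 12.3.3]{lange} makes $A_4$ Prym-Tyurin of exponent $2$ for $C$, and Theorem \ref{prop:upperbound} gives $\deg(\varphi_4)\leq 2$.

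The step I expect to be the main obstacle is pinning down the \emph{exact} exponent $8$ of $A_\ell$ as a Prym-Tyurin variety for $C$ in the case $\ell=1,2,3$. This is not formal: it requires both the external structural input \cite[Proposition 5.2]{decomp} realizing $A_\ell$ as the pullback of an honest Prym variety, and a careful application of the multiplicativity of exponents in Lemma \ref{prop:MultiplicativityOfExponents}, which in turn hinges on $\psi_\ell^*$ being an embedding so that $J(\tilde C_\ell)$ is itself Prym-Tyurin of exponent $|K_\ell|$ for $C$. Everything else reduces to reading off the kernels from the character theory of $Q_8$ and quoting the bounds already proved.
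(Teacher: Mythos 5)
Your proposal reproduces the paper's own proof almost step by step: the same kernels $K_0=Q_8$, $|K_\ell|=4$ for $\ell=1,2,3$, $K_4=\langle 1\rangle$ (the paper quotes \cite[Section 4.3]{decomp} rather than re-deriving them from character theory); the same lower bounds from Theorem \ref{propf}; the same treatment of $A_0$ via Proposition \ref{prop:morfPT} and Theorem \ref{thm:conj} (your use of Lemma \ref{lem:AsubpsiJC} to identify $A_0=\psi_0^*(J(\tilde C_0))$ is a welcome detail the paper leaves implicit); and the same use of \cite[Prop. 5.2]{decomp}, \cite[Thm. 12.3.3]{lange} and Theorem \ref{prop:upperbound} for $A_1,A_2,A_3$ and $A_4$.

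The one place where you supply more detail than the paper is the step you yourself flag as the main obstacle: showing $A_\ell$ is Prym-Tyurin of exponent $8$ for $C$ when $\ell=1,2,3$ (the paper asserts this with a bare ``hence''). Two problems with your justification as written. First, Lemma \ref{prop:MultiplicativityOfExponents} is stated in the opposite logical direction: its hypotheses require that \emph{both} $A\subset A'$ are already Prym-Tyurin for $C$, and for $A=A_\ell$ that is precisely what you are trying to prove, so citing the lemma here is circular. What does work is to rerun the chain of algebraic equivalences (\ref{eq:multexp}) in reverse: with $h\colon A_\ell\hookrightarrow \psi_\ell^*(J(\tilde C_\ell))$ and $i\colon \psi_\ell^*(J(\tilde C_\ell))\hookrightarrow J(C)$, one gets $j^*\Theta_C=h^*(i^*\Theta_C)\equiv 4\,h^*\Theta_{\tilde C_\ell}\equiv 8\xi$, where $\xi$ is the principal polarization of the Prym variety $P$. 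Second, you assert parenthetically that $\psi_\ell^*$ is an embedding; this is neither a hypothesis of the proposition nor something you derive, and it is not automatic --- a degree-$4$ quotient by $\langle i\rangle\cong\Z_4$ can perfectly well factor through a cyclic \'etale cover. Contrast this with the $D_p$ example, where the paper explicitly adds a non-cyclic-\'etale hypothesis exactly to secure such an embedding. (What the exponent computation really needs is injectivity of $\psi_\ell^*$ restricted to $P$; the paper itself elides this point, so your write-up is faithful to its argument, but the extra justification you offer does not yet close the gap it aims to close.)
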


\section*{Acknowledgement}
The authors wish to thank Anita Rojas for many interesting discussions on the subject of this paper.
\smallskip 

\noindent This paper is part of the second author's thesis written at the Universidade Federal Fluminense, and was financed in part by the Coordena\c c\~ao de Aperfei\c coamento de Pessoal de N\'ivel Superior - Brasil (CAPES) - Finance Code 001.

\bigskip
\noindent{\smallsc Juliana Coelho, Universidade Federal Fluminense (UFF), 
Instituto de Matem\'atica e Estat\'istica - 
 Rua Prof. Marcos Waldemar de Freitas Reis, S/N, 
 Gragoat\'a, 24210-201 - Niter\'oi -  RJ,  Brasil}\\
{\smallsl E-mail address: \small\verb?julianacoelhochaves@id.uff.br?}
\bigskip
\bigskip

\noindent{\smallsc Kelyane Abreu, Universidade Federal Fluminense (UFF), 
Instituto de Matem\'atica e Estat\'istica - 
 Rua Prof. Marcos Waldemar de Freitas Reis, S/N, 
 Gragoat\'a, 24210-201 - Niter\'oi -  RJ,  Brasil}\\
{\smallsl E-mail address: \small\verb?kelyane_abreu@hotmail.com?}

\end{document}